
\documentclass[11pt, oneside, 
a4paper]{article}
\usepackage[ansinew]{inputenc}
\usepackage[english]{babel}
\usepackage{amsbsy,amscd,amsfonts,amsmath,amsopn,amssymb,amstext,amsthm,amsxtra,array,color,fancybox,float,graphicx,latexsym,subfigure,srcltx,times,tikz,url}
\voffset 0cm \hoffset 0cm
\leftmargin 2cm \rightmargin 2cm
 \textwidth 12.8cm
 \textheight 20cm
 \hyphenation{}
 \listfiles


\newcommand{\hip}{\{x_n=0\}}
\newcommand{\hipp}{\{x_3=0\}}

\newcommand{\norma}[1]{M_{#1}^{nor}}
\newcommand{\mx}[1]{W_n}
\newcommand{\dom}{\rrr^{n^2-n}_{\le0}}
\newcommand{\spann}{\operatorname{span}}
\newcommand{\rrr}{{\overline{\R}}}
\newcommand{\mayor}{{_{\ge0}}}

\newcommand{\mr}{\operatorname{mr}}
\newcommand{\MR}{\operatorname{MR}}
\newcommand{\mc}{\operatorname{mc}}
\newcommand{\MC}{\operatorname{MC}}

\newcommand{\sucen}[2]{{#1_{1},\hdots,#1_{#2}}}


\newcommand{\realamp}{\R\cup\{-\infty\}}






\newcommand{\m}{\medskip}

\newcommand{\N}{\mathbb{N}}

\newcommand{\R}{\mathbb{R}}

\newcommand{\HH}{\mathcal{H}}

\newcommand{\PP}{\mathcal{P}}

\newcommand{\id}{\operatorname{id}}
\newcommand{\tr}{\operatorname{tr}}

\renewcommand{\int}{\operatorname{int}}

\newcommand{\row}{\operatorname{row}}

\newcommand{\col}{\operatorname{col}}

\newcommand{\card}{\operatorname{card}}

\newcommand{\diag}{\operatorname{diag}}

\newtheorem{thm}{Theorem}

\newtheorem{dfn}[thm]{Definition}
\newtheorem{cor}[thm]{Corollary}
\newtheorem{prop}[thm]{Proposition}
\newtheorem{ex}[thm]{Example}

\newtheorem{alg}[thm]{Algorithm}

\title{Matrices commuting with a given normal  tropical matrix}
\author{J. Linde\\Dpto. de Algebra\\Facultad de Matem\'{a}ticas\\Universidad Complutense\\\texttt{jorgelinde@ucm.es}
\and
M.J. de la Puente\thanks{
Corresponding author}\\Dpto. de Algebra\\Facultad de Matem\'{a}ticas\\Universidad Complutense\\\texttt{mpuente@mat.ucm.es}\\Phone: 34--91--3944659}

\date{} 

\begin{document}
\maketitle
AMS class.: 15A80;  14T05.

Keywords and phrases: tropical algebra, commuting matrices, normal matrix, idempotent matrix, alcoved polytope,  convexity.

\begin{abstract}
Consider the space $\norma n$ of   square  normal matrices $X=(x_{ij})$ over $\realamp$, i.e., $-\infty\le x_{ij}\le0$ and $x_{ii}=0$. Endow $\norma n$ with the tropical sum $\oplus$ and  multiplication $\odot$.
Fix a real matrix $A\in\norma n$ and consider the set $\Omega(A)$ of matrices in $\norma n$ which commute with $A$.
We prove that  $\Omega(A)$ is a finite union of alcoved  polytopes; in particular, $\Omega(A)$ is a finite union of convex sets.
The set $\Omega^A(A)$ of  $X$ such that  $A\odot X=X\odot A=A$ is also a finite union of alcoved  polytopes. The same is true for the set $\Omega'(A)$ of  $X$ such that  $A\odot X=X\odot A=X$.

A topology is given to $\norma n$. Then, the set $\Omega^{A}(A)$ is a neighborhood of the identity matrix $I$.  If $A$ is strictly normal, then $\Omega'(A)$ is a neighborhood of the zero matrix.
In one case,   $\Omega(A)$ is a neighborhood of $A$. We give an upper bound for the dimension of $\Omega'(A)$.   We explore the relationship between the polyhedral complexes $\spann A$, $\spann X$ and $\spann (AX)$, when $A$ and $X$ commute. Two matrices, denoted $\underline{A}$  and $\overline{A}$, arise from $A$, in connection with $\Omega(A)$. The geometric meaning  of them   is given in detail, for one example. We produce examples of matrices which commute, in any dimension.
\end{abstract}

\section{Introduction}\label{sec:intro}

Let  $n\in\N$ and  $K$ be a field. Fix a matrix $A\in M_n(K)$ and consider $K[A]$, the algebra of polynomial expressions in $A$.  In classical mathematics, the set $\Omega(A)$ of matrices   commuting with $A$ is well--known: $\Omega(A)$ equals $K[A]$  if and only if the characteristic and minimal polynomials of $A$ coincide. Otherwise, $K[A]$ is a proper linear subspace of $\Omega(A)$; see \cite{Prasolov}, chap. VII.

\m
In this paper we study the analogous of $\Omega(A)$ in the tropical setting. Moreover,  we restrict ourselves to square  \emph{normal} matrices over $\overline{\R}:=\realamp$, i.e., matrices $A=(a_{ij})$  with $a_{ii}=0$ and $-\infty\le a_{ij}\le0$, for all $i,j$. The set of all such matrices, endowed with the tropical operations  $\oplus=\max$ and  $\odot=+$, is denoted $\norma n$.

\m
For any $r\in \R_{\le0}$, the half--line $[-\infty,r):=\{x:-\infty\le x<r\}$ is open in $\rrr_{\le0}$ with the usual interval topology. A Cartesian product of such half--lines is open in $\dom$ with the usual product topology. \label{dfn:topology}
The half--line $(r,0]:=\{x:r<x\le0\}$ is open in $\rrr_{\le0}$. A Cartesian product of such half--lines is open in $\dom$.

\m
The set $\norma n$ can be  identified with $\dom$ and,
via this identification, $\norma n$ gets a \emph{topology.}
Consider a matrix $X\in\norma n$ and a subset $V\subseteq \norma n$. We say that $V$ is a \emph{neighborhood} of $X$ if there exists an open subset $U\subseteq \norma n$ such that $X\in U\subseteq V$ (we do not require $V$ to be open).

\m
Let  $\Omega(A)$ be  the subset  of matrices  commuting with a given  real matrix $A$, i.e., $X\in \norma n$  such that  $A\odot X=X\odot A$. The tropical analog of $K[A]$ inside $\norma n$ is the set $\PP(A)$ of tropical powers of $A$. In general, $\Omega(A)$ is larger than $\PP(A)$ (see proposition \ref{prop:Eijeps}). \label{commment:larger}

\m
Our new results are gathered in sections \ref{sec:general}, \ref{sec:perturbations} and \ref{sec:geom}.
In section \ref{sec:general} we prove that $$\Omega(A)=\bigcup_w \Omega_w(A)$$ is  a finite union of alcoved  polytopes,
  (see corollary \ref{cor:finite_un}). In particular, $\Omega(A)$ is a finite union of convex sets.

\m
 Two important subsets of $\Omega(A)$ are
 $$\Omega^A(A)=\{X\in \Omega(A):X\odot A=A\odot X=A\}$$
 and
 $$\Omega'(A)=\{X\in \Omega(A):X\odot A=A\odot X=X\}.$$
  Both are finite unions of alcoved  polytopes (see theorems \ref{thm:alcov^A} and  \ref{thm:alcov'}). Moreover, $\Omega^A(A)$ is a neighborhood (not necessarily open) of the identity matrix $I$. If $A$ is strictly normal, then $\Omega'(A)$ is a neighborhood of the zero matrix $0$ (see propositions \ref{prop:neighI} and \ref{prop:neigh0}).

\m
The study of $\Omega^{A}(A)$ and $\Omega'(A)$ lead us to two matrices arising  from $A$, denoted $\underline{A}$ and $\overline{A}$, and we prove
$$\underline{A}\le A\le \overline{A},$$  (see proposition \ref{prop:ineq_bars}).  Moreover, $X\le \underline{A}$ is a necessary condition for  $A\odot X=X\odot A=A$, and $\overline{A}\le X$ is a necessary condition for  $A\odot X=X\odot A=X$ (see corollary \ref{cor:neces_cond}).
This provides an upper bound for the dimension of  $\Omega'(A)$ (see corollary \ref{cor:dimL'}).
The matrix $\underline{A}$ is explicitly given in expression (\ref{eqn:underl}), while the definition and computation of  $\overline{A}$ is more involved  (see definition \ref{dfn:overl}). 

\m
In section \ref{sec:perturbations} we study some instances of commutativity of matrices under perturbations. Theorem \ref{thm:perturba_convexo} is an easy way to produce two real matrices in $\norma n$ which commute. Another way to obtain two such matrices is given in theorem \ref{thm:perturba_no_convexo}.  The geometry is different in both instances: 
in the first case, the polyhedral complexes (i.e., tropical column spans) associated to the matrices are convex, but not so in the second.
Under certain hypothesis we prove that  $\Omega(A)$ is a neighborhood of $A$ (see corollary \ref{cor:nbhd}).

\m
Section \ref{sec:geom} has an exploratory nature.  We examine the relationship among the complexes $\spann A$, $\spann B$, $\spann (AB)$ and $\spann (BA)$ when commutativity is present or absent. In addition,
the geometric meaning  of the  matrices  $\underline{A}, A$ and $\overline{A}$ is given in full detail, for one example  in the paper.
We believe that classical convexity of  $\spann A$ depends on the matrices $\underline{A}$ and $\overline{A}$. We suspect that  this is related to the question of commutativity.
We leave two open questions in pages p. \pageref{quest:perturbation} and \pageref{quest:generalize}.

\m
Alcoved polytopes play a crucial role in this paper. By definition, a polytope $\PP$ in $\R^{n-1}$ is  \emph{alcoved} if it can be described by inequalities $c_{i}\le x_i\le b_{i}$ and $c_{ik}\le x_i-x_k\le b_{ik}$ , for some $i,k\in[n-1]$, $i\neq k$, and $c_i, b_i,c_{ik}, b_{ik}\in\R\cup\{\pm\infty\}$. They are classically convex sets.  Alcoved polytopes  have been studied in \cite{Lam_Postnikov,Lam_PostnikovII}. In connection with tropical mathematics,  they appeared in
\cite{Jimenez_P,Joswig_Kulas,Joswig_Sturm_Yu,Puente_kleene,Werner_Yu}.
\emph{Kleene stars} are matrices $A$ such that $A={A}^*$, where $*$ is the so--called Kleene operator. Alcoved polytopes and Kleene stars are closely related notions; see \cite{Puente_kleene,Sergeev_def,Sergeev_S_B}.

\m
By definition, a matrix $A=(a_{ij})$ over $\rrr$ is \emph{normal} if $a_{ii}=0$ and $-\infty\le a_{ij}\le0$, for all $i,j$. It is \emph{strictly normal} if, in addition,  $-\infty\le a_{ij}<0$, for all $i\neq j$.
There are FOUR REASONS for us to restrict to normal matrices.  First, it is not all too restrictive. Indeed,
 by the Hungarian Method (see \cite{Butkovic,Butkovic_S,Kuhn,Papa}), for every matrix $A$ there exist a (not unique) similar  matrix   $N$ which is normal. In practice, this means that by a relabeling of the columns of $A$ and a translation, any $A$ can be assumed to be normal. Second, normality of $A$ has a clear geometric meaning in $\R^{n-1}$. Consider the alcoved polytope
\begin{equation}\label{eqn:CA}
C_A:=\left\{x\in\R^{n-1}:\  {{a_{in}\le x_i\le-a_{ni}}\atop {a_{ik}\le x_i-x_k\le-a_{ki}}};\  1\le i\neq k\le n-1\right\}.
\end{equation}
Then, $A$ is normal if and only if the zero vector belongs to $C_A$
and the columns of the matrix $A_0$ (see definition in p. \pageref{not:sub_cero}), viewed as points in $\R^{n-1}$, lie around the zero vector and are listed in a predetermined order (and this order is a kind of orientation in $\R^{n-1}$); see \cite{Puente_kleene} and also
\cite{Johns_Kambi_Idempot,Izha_Johns_Kambi_Proj,Izha_Johns_Kambi_Groups}. Third, when computing examples, normal matrices are easy to handle, due to  inequalities (\ref{eqn:chain}).
Fourth and last, normal matrices satisfy many max--plus properties (e.g., they are strongly definite; see \cite{Butkovic_S,Butkovic_libro}).

\m
Some aspects of commutativity in tropical algebra (also called max--plus algebra or max--algebra) have been addressed earlier. It is known that two commuting matrices have a common eigenvector; see \cite{Butkovic_libro}, sections 4.7, 5.3.5 and 9.2.2.   In \cite{Katz_Schn_Sergeev} it is proved that the critical digraphs of two commuting irreducible matrices have a common node.

\section{Background and notations}\label{sec:background}
For $n\in\N$, set $[n]:=\{1,2,\ldots,n\}$. Let  $\R_{\le0}$, $\R_{\ge0}$, $\overline{\R}_{\le0}$, etc.  have the obvious meaning. On  $\overline{\R}_{\le0}$, i.e., on the closed unbounded half--line $[-\infty,0]$, we consider the \emph{interval topology}: an open set in $[-\infty,0]$ is either a finite intersection or an arbitrary union of sets of the form $[-\infty,a)$ or $(b,0]$, with $-\infty<a,b<0$.

\m
$\oplus=\max$ is the tropical sum and $\odot=+$ is the tropical product. For instance, $3\oplus (-7)=3$  and $3\odot (-7)=-4$.
Define tropical sum and product of matrices following the same rules of classical linear algebra, but replacing addition (multiplication) by tropical addition (multiplication). Consider order $n$ square matrices. The \emph{tropical multiplicative identity} is $I=(\alpha_{ij})$, with  $\alpha_{ii}=0$ and $\alpha_{ij}=-\infty$, for $i\neq j$. The  \emph{zero matrix} is denoted $0$ (every entry of it is null).
We will never use classical  multiplication of matrices; thus $A\odot X$ will be written $AX$,  for matrices $A,X$, for simplicity.

\m
If $A=(a_{ij})$ and $B=(b_{ij})$ are matrices of the same order, then $A\le B$ means $a_{ij}\le b_{ij}$, for all $i,j$.

\m
By definition, a square matrix $A=(a_{ij})$ over $\rrr$ is \emph{normal} if $a_{ii}=0$ and $-\infty\le a_{ij}\le0$, for all $i,j$. Thus, $A$ is normal if and only if $I\le A\le0$. Let us define $A^0$ to be the identity matrix $I$.
So we have
\begin{equation}\label{eqn:chain}
I=A^0\le A\le A^2\le A^3\le\cdots\le  0
\end{equation} since tropical multiplication by any matrix is monotonic (because it amounts to computing certain sums and maxima). By a theorem of Yoeli's (see \cite{Yoeli}), we have $A^{n-1}=A^{n}=A^{n+1}=\cdots$ and we denote this matrix by $A^*$ and call it the \emph{Kleene star}  of $A$. A matrix $A$ is a Kleene star if $A=A^*$.

\m
A normal matrix $A$ is \emph{strictly normal} if $a_{ij}<0$, whenever $i\neq j$.

\m
Let $\norma n$ denote the family of order $n$ normal matrices over $\rrr$. It is in bijective correspondence  with $\dom$. We consider the \emph{product interval topology} on $\dom$. The bijection carries this topology onto  $\norma n$. The \emph{border} of $\norma n$
is the set of matrices $A$ such that  $a_{ij}=0$ or $-\infty$, for some $i\neq j$.

\m
We will write the coordinates of points in $\R^n$ in columns.
Let $A\in \R^{n\times m}$ and denote by $a_1,\ldots,a_m\in \R^n$ the columns of $A$.
The \emph{(tropical column) span} of $A$ is, by definition,
\begin{eqnarray}
\spann A:&=\{(\mu_1\odot a_1)\oplus\cdots\oplus (\mu_m\odot a_m) \in \R^n: \mu_1,\ldots,\mu_m\in\R\}\\
\nonumber&=\max\{\mu_1u+a_1,\ldots,\mu_mu+a_m: \mu_1,\ldots,\mu_m\in\R\}
\end{eqnarray}
where $u=(1,\ldots,1)^t$ and maxima are computed coordinatewise.
We will never use classical linear spans in this paper.
Clearly, the set $\spann A$ is closed under classical addition  of the vector $\mu u$, for $\mu\in\R$, since $\odot=+$. Therefore, the hyperplane section  $\hip\cap\spann A$ determines $\spann A$ completely. The set $\hip\cap\spann A$ is a connected polyhedral complex of impure dimension  $\le n-1$ and  it is not convex, in general. Let $A$ be normal. Then  $\spann A=C_A$ in (\ref{eqn:CA}) (and so it is convex) if and only if  $A$ is a Kleene--star; see \cite{Puente_kleene,Sergeev_def}. Throughout the paper, we will identify the hyperplane $\hip$ inside $\R^n$ with $\R^{n-1}$. In particular, columns of order $n$ matrices having zero last row are considered as points in $\R^{n-1}$.

\m
For any $d\in \R^n$, $\diag d$ denotes the square matrix whose diagonal is $d$ and is $-\infty$ elsewhere.

\m
For any real matrix $A$, the matrix $A_0$ is defined as the tropical product
\begin{equation}
A\diag(-\row(A,n)).
\end{equation}
Thus, the $j$--th column of $A_0$ is a tropical multiple  of the corresponding column of $A$  (i.e., the $j$--th column of $A_0$ is the sum of the vector $-a_{nj} u$ and the $j$--th column of $A$). The  last row of $A_0$ is zero. \label{not:sub_cero} Therefore, the matrix $A_0$ is used to draw the complex $\{x_n=0\}\cap \spann A$ inside $\R^{n-1}$. The sets $\spann A$ and $\{x_n=0\}\cap \spann A$ determine each other.

\m
The simplest objects in the tropical plane $\overline{\R}^2$ are lines. Given
 a tropical linear form
$$p_1\odot X\oplus p_2\odot Y\oplus p_3=\max\{p_1+X ,
    p_2+Y,p_3
     \}$$ a \emph{tropical line} consists  of the points  $(x,y)^t$ where this \emph{maximum is attained, at least, twice}. Such twice--attained--maximum condition is the tropical analog of the classical vanishing point set. Denote this line by $L_p$, where $p=(p_1,p_2,p_3)\in\R^3$.
Lines in the  tropical plane are \emph{tripods}. Indeed,  $L_p$ is  the union of three rays meeting at point $(p_3-p_1,p_3-p_2)^t$, in the directions  west,   south and north--east. The point is  called the \emph{vertex} of $L_p$.

Take $p=0$. The line $L_0$ splits the plane  $\overline{\R}^2$ into three closed sectors $S_1:=\{x\ge0,\  x\ge y\}$, $S_2:=\{x\le y,\ y\ge 0\}$ and $S_3:=\{x\le0,\ y\le0\}$. An order 3 real matrix $A$ is normal if and only if  (omitting the last row in $A_0$, which is zero) each column of $A_0$ lies in the corresponding sector i.e.,  $\col (A_0,j)\in S_j$, for $j=1,2,3$.  For instance,  consider the normal matrix $B$ and take $B_0$ in example \ref{ex:de_nuevo}, figure \ref{fig_03} top centre, p. \pageref{ex:de_nuevo}. Notice that $(5,1)^t\in S_1$, $(-3,0)^t\in S_2$ and
$(-1,-6)^t\in S_3$.  An analogous statement holds for $\overline{\R}^{n-1}$ and order $n$ matrices. See
\cite{Brugalle_fran,Brugalle_engl,Gathmann,Franceses,Rusos,Litvinov_ed,Mikhalkin_W,Richter} for an introduction to tropical geometry. See
\cite{Akian_HB,Baccelli,Butkovic,Butkovic_libro,Cuninghame,Cuninghame_New,Wagneur_M,Zimmermann_K} for an introduction to tropical (or max--plus) algebra.

\section{Normal matrices which commute with $A$}\label{sec:general}
The set $\norma 2$ is commutative, since $AB=BA=A\oplus B$, for any $A,B\in\norma 2$.
Thus, we will study the set
\begin{equation}
\Omega(A):=\{X\in \norma n: AX=XA\},
\end{equation}
for a real matrix $A\in \norma n$ and $n\ge3$.

\m
If $A\in\norma n$ is real and $\lambda\in\R$, then $\lambda\odot A=\lambda u+A$  is normal if and only if $\lambda=0$, where $u$ denotes the order $n$ one matrix. Together with (\ref{eqn:chain}), this means that the tropical analog of $K[A]$ inside $\norma n$ is the set of powers of $A$ together with the zero matrix
\begin{equation}\label{eqn:PA}
\PP(A):=\{I=A^0, A, A^2, \ldots, A^{n-1}=A^*,0\}.
\end{equation}

For $A\in\norma n$ real, set
\begin{equation}\label{eqn:m(A)M(A)}
m(A):=\min_{i,j\in [n]} a_{ij}=\min_{i\neq j\in [n]} a_{ij}\in\R_{\le0}, \qquad M(A):=\max_{i\neq j\in [n]} a_{ij}\in\R_{\le0}.
\end{equation}

\m
For each $r\in \R$, and  $i,j\in [n]$, $i\neq j$, let $E_{ij}(r)\in\norma n$ denote the matrix whose $(i,j)$ entry equals $r$, being  zero everywhere else. For a generic $A\in\norma n$ the matrix $E_{ij}(r)$ is not a power of $A$.

\m
The following proposition  shows that, in general, $\Omega(A)$ is larger than $\PP(A)$.

\begin{prop}\label{prop:Eijeps}
For any real $A\in\norma n$   there exist  $\epsilon>0$ and $i,j\in [n]$ with  $i\neq j$ such that  $E_{ij}(-\epsilon)\in\Omega(A)$.
\end{prop}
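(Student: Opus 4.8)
The plan is to prove this by a direct computation with a single elementary matrix $E_{ij}(-\epsilon)$. I would fix an arbitrary pair $i\neq j$ in $[n]$ (possible since $n\geq3$), write $E:=E_{ij}(-\epsilon)$ — the normal matrix with $0$ on the diagonal, $-\epsilon$ in position $(i,j)$, and $-\infty$ in all other positions — and compute the two products $AE$ and $EA$ straight from the definitions $\odot=+$ and $\oplus=\max$, compare them entry by entry, and then choose $\epsilon$ so that they agree.

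Since $E$ coincides with the tropical identity $I$ except at the entry $(i,j)$, a direct check shows that $AE$ agrees with $A$ outside its $j$-th column, where $(AE)_{kj}=\max\{a_{kj},\,a_{ki}-\epsilon\}$, and that $EA$ agrees with $A$ outside its $i$-th row, where $(EA)_{il}=\max\{a_{il},\,a_{jl}-\epsilon\}$. Comparing the two matrices: off the $i$-th row and $j$-th column both equal $A$; at position $(i,j)$ both equal $\max\{a_{ij},-\epsilon\}$ (using $a_{ii}=a_{jj}=0$); and in the remaining entries the products coincide precisely when $a_{ki}-\epsilon\leq a_{kj}$ for every $k\neq i$ and $a_{jl}-\epsilon\leq a_{il}$ for every $l\neq j$. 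Hence $E_{ij}(-\epsilon)\in\Omega(A)$ if and only if $\epsilon\geq\max\{\max_{k\neq i}(a_{ki}-a_{kj}),\ \max_{l\neq j}(a_{jl}-a_{il})\}$.

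To conclude, I would invoke that $A$ is real and normal, so every entry lies in $(-\infty,0]$ and therefore each of the finitely many differences $a_{ki}-a_{kj}$ and $a_{jl}-a_{il}$ is at most $-m(A)<+\infty$; consequently $\epsilon:=\max\{1,\,-m(A)\}$ is a strictly positive real meeting all the inequalities, so $AE=EA$ and $E_{ij}(-\epsilon)\in\Omega(A)$. I do not expect a genuine obstacle here — the whole argument is bookkeeping with $\odot$. The two points needing care are pinning down exactly which entries of $AE$ and $EA$ can differ from those of $A$, so that the comparison collapses to the two families of scalar inequalities above, and making sure $\epsilon$ is chosen strictly positive even in the degenerate case $m(A)=0$ (that is, $A=0$), which is why I take the maximum with $1$ rather than $-m(A)$ alone. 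For a generic $A$ the element $E_{ij}(-\epsilon)$ produced this way does not belong to the finite set $\PP(A)$, which is the point the proposition is making.
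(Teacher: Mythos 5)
Your computation is internally consistent, but it is carried out for the wrong matrix: you have misread the definition of $E_{ij}(r)$. The paper defines $E_{ij}(r)$ as ``the matrix whose $(i,j)$ entry equals $r$, being \emph{zero} everywhere else,'' where zero means the real number $0$ (as in the paper's zero matrix $0=K(0)$), not the tropical additive identity $-\infty$. So $E_{ij}(-\epsilon)$ is a one-entry perturbation of the all-zeros matrix, whereas you work with a one-entry perturbation of the tropical identity $I$ (diagonal $0$, all other off-diagonal entries $-\infty$). These two matrices behave very differently under $\odot$. With the correct definition one gets $A\,E_{ij}(-\epsilon)=E_{ij}(\alpha)$ and $E_{ij}(-\epsilon)\,A=E_{ij}(\beta)$, where $\alpha$ is the maximum of the $i$-th row of $A$ with $a_{ii}$ replaced by $-\epsilon$ and $\beta$ is the analogous column maximum; commutation then forces a different strategy from yours, namely either choosing a pair $(i,j)$ with $a_{ij}=0$ (so $\alpha=\beta=0$), or, when $A$ is strictly normal, choosing $\epsilon$ \emph{small}, $0<\epsilon<-M(A)$, so that $\alpha=\beta=-\epsilon$. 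Your argument instead takes $\epsilon$ \emph{large}, which is the opposite regime, and your opening claim that ``$E$ coincides with the tropical identity $I$ except at the entry $(i,j)$'' is false for the matrix actually named in the proposition. As written, your proof therefore does not establish the stated result.

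That said, the matrix you construct (the identity with $(i,j)$ entry set to $-\epsilon$ for $\epsilon\ge\max\{1,-m(A)\}$) is normal, does commute with $A$ by your entrywise comparison, and generically lies outside $\PP(A)$; so you have proved a correct and analogous fact that would equally serve the purpose stated on the page where the proposition is invoked (that $\Omega(A)$ is larger than $\PP(A)$). To repair the proof of the proposition as stated, redo the product computation with the all-zeros-off-$(i,j)$ matrix and split into the two cases above.
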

\begin{proof}
Fix $i$, $j$ and $\epsilon$. We have $AE_{ij}(-\epsilon)=E_{ij}(\alpha)$ and $E_{ij}(-\epsilon)A=E_{ij}(\beta)$, where

$\alpha=\max\{ a_{i1},\ldots, a_{i,i-1},-\epsilon,a_{i,i+1},\ldots,a_{in}\}$ and

$\beta=\max\{a_{1j},\ldots, a_{j-1,j},-\epsilon, a_{j+1,j},\ldots,a_{nj}\}$.

If   $a_{ij}=0$, then  $\alpha=\beta=a_{ij}=0$, whence $AE_{ij}(-\epsilon)=E_{ij}(-\epsilon)A=0$.

Assume now that $A$ is strictly normal. Then $M(A)<0$.
For any $\epsilon$ with $M(A)<-\epsilon<0$ and any $i\neq j$, we have $\alpha=\beta=-\epsilon$, whence $AE_{ij}(-\epsilon)=E_{ij}(-\epsilon)A=E_{ij}(-\epsilon)$.
\end{proof}

\m
Let $\mx n$  be the set of empty--diagonal order $n$ matrices with entries in $[n]^2$ (the diagonal is irrelevant in these matrices).  Each $w\in \mx n$ is called a \emph{winning position} or a \emph{winner}. Set
\begin{align}\label{eqn:LwA}
\Omega_w(A):=\{X\in \Omega(A): &(AX)_{ij}=a_{i,w(i,j)_1}+x_{w(i,j)_1,j}=\notag\\
&(XA)_{ij}=x_{i,w(i,j)_2}+a_{w(i,j)_2,j}, \text{\ for\ } i,j\in[n], i\neq j\}.
\end{align}

\begin{ex}\label{ex:1}
Consider
$$A=\left[\begin{array}{rrrr}
   0&-4&-6&-3\\-6&0&-4&-3\\-3&-6&0&-3\\-6&-3&-3&0
   \end{array}\right],\quad B=\left[\begin{array}{rrrr}
   0&-4&-4&-6\\-2&0&-3&-4\\-5&-6&0&-5\\-6&-5&-2&0
   \end{array}\right].$$
   Then
   $$AB=BA=\left[\begin{array}{rrrr}
   0&-4&-4&-3\\-2&0&-3&-3\\-3&-6&0&-3\\-5&-3&-2&0
   \end{array}\right]$$
   so that $B\in \Omega_w(A)$ with
   $$w=\left[\begin{array}{cccc}
   &(1,1)&(1,3)&(4,1)\\
   (2,1)&&(2,3)&(4,2)\\
   (1,3)&(2,2)&&(4,3)\\
   (2,3)&(2,4)&(4,3)&
   \end{array}\right].\qed$$
\end{ex}

\begin{ex} \label{ex:2}
For any real $A\in\norma n$,
\begin{itemize}
\item if  $\tr$ denotes the transposition operator, then $I\in \Omega_{\tr}(A)$,
\item if $\id$ denotes the identity operator, then $0, A^*\in \Omega_{\id}(A)$.
\end{itemize}
\end{ex}

\begin{prop}\label{prop:alcov_w}
For any real $A\in\norma n$, $\Omega_w(A)$ is an alcoved polytope.
\end{prop}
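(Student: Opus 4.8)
The plan is to unwind the definition (\ref{eqn:LwA}) of $\Omega_w(A)$ into an explicit finite system of linear relations among the entries of $X$, and then to check that each relation bounds either a single entry $x_{ij}$ or a difference $x_{ij}-x_{kl}$ of two entries --- which is exactly the shape of the defining relations of an alcoved polytope in the ambient space $\dom\cong\norma n$.

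First I would rewrite the conditions in (\ref{eqn:LwA}). Fix $i\ne j$ and put $p:=w(i,j)_1$, $q:=w(i,j)_2$. Since $(AX)_{ij}=\max_{k\in[n]}(a_{ik}+x_{kj})$ and the term $a_{ip}+x_{pj}$ occurs among those being maximized, the requirement $(AX)_{ij}=a_{ip}+x_{pj}$ is equivalent to
$$a_{ik}+x_{kj}\ \le\ a_{ip}+x_{pj}\qquad(k\in[n]),$$
and likewise $(XA)_{ij}=x_{iq}+a_{qj}$ is equivalent to $x_{ik}+a_{kj}\le x_{iq}+a_{qj}$ for all $k\in[n]$; the remaining clause $(AX)_{ij}=(XA)_{ij}$ reads $a_{ip}+x_{pj}=x_{iq}+a_{qj}$ (on the diagonal $(AX)_{ii}=(XA)_{ii}=0$ automatically, since $A$ and $X$ are normal). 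These relations already force $AX=XA$, so the membership $X\in\Omega(A)$ asked for in (\ref{eqn:LwA}) is automatic. Hence $X\in\Omega_w(A)$ if and only if $X$ is normal and, for all $i\ne j$ and all $k\in[n]$,
$$x_{kj}-x_{pj}\ \le\ a_{ip}-a_{ik},\qquad x_{ik}-x_{iq}\ \le\ a_{qj}-a_{kj},\qquad x_{pj}-x_{iq}\ =\ a_{qj}-a_{ip}.$$

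It then remains to observe that each such relation is of alcoved type. Because $A$ is a fixed real matrix, all the right-hand sides are fixed real numbers. If $k=p$ (resp.\ $k=q$) the first (resp.\ second) inequality is trivial. If one of the indices appearing equals the pertinent row or column index, the corresponding entry of $X$ is a diagonal entry, hence the constant $0$, and the relation becomes a one-sided bound on a single coordinate of $X$ (or, when two indices coincide, a numerical identity that holds or fails independently of $X$); the equation $x_{pj}-x_{iq}=a_{qj}-a_{ip}$ splits into two inequalities and degenerates in the same way. In every other case the relation bounds the difference of two distinct off-diagonal entries of $X$. Adjoining the normality constraints $-\infty\le x_{ij}\le 0$, which are one-sided bounds on single coordinates, exhibits $\Omega_w(A)$ as the intersection of finitely many half-spaces, each involving one coordinate or the difference of two coordinates; thus $\Omega_w(A)$ is an alcoved polytope (possibly empty --- which is the case for most $w$ --- and the empty set is counted as a degenerate alcoved polytope). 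I expect the only delicate point to be the bookkeeping of the finitely many index coincidences, together with the possibility that a coordinate of $X$ takes the value $-\infty$; both are harmless, since a bound involving a diagonal entry $x_{ii}=0$ still has alcoved shape and the paper's notion of alcoved polytope already permits infinite bounds on coordinates and on their differences. The substance of the argument is the reformulation of the previous paragraph; the rest is routine.
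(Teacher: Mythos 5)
Your proposal is correct and follows essentially the same route as the paper: unwinding the definition of $\Omega_w(A)$ for each pair $i\neq j$ into the single equality $a_{i,w(i,j)_1}+x_{w(i,j)_1,j}=x_{i,w(i,j)_2}+a_{w(i,j)_2,j}$ together with the $2n-2$ winner inequalities, and observing that each such relation constrains one coordinate $x_{ij}$ or a difference of two coordinates, hence cuts out an alcoved polytope in $\dom\simeq\norma n$. Your extra bookkeeping (diagonal entries equal to $0$, the automatic nature of the membership $X\in\Omega(A)$, and the degenerate index coincidences) is a welcome elaboration of details the paper leaves implicit, but it does not change the argument.
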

\begin{proof}
Fix $i,j\in[n], i\neq j$. Then (\ref{eqn:LwA}) means that
\begin{equation}\label{eqn:eqn}
a_{i,w(i,j)_1}+x_{w(i,j)_1,j}=x_{i,w(i,j)_2}+a_{w(i,j)_2,j}
\end{equation} and the following $2n-2$ inequalities hold
\begin{align}
a_{is}+x_{sj}&\le a_{i,w(i,j)_1}+x_{w(i,j)_1,j}, \text{\ for\ } s\neq w(i,j)_1,\label{eqn:ineq1}\\
x_{it}+a_{tj}&\le x_{i,w(i,j)_2}+a_{w(i,j)_2,j}, \text{\ for\ } t\neq w(i,j)_2.\label{eqn:ineq2}
\end{align}
Equalities and inequalities (\ref{eqn:eqn}), (\ref{eqn:ineq1})  and (\ref{eqn:ineq2}) show that  $X\in \Omega_w(A)$ if and only if $X=(x_{ij})$ belongs to certain alcoved polytope in $\dom\simeq \norma n$.
\end{proof}

Remark 1: Given a winner $w$, if there exist   $i,j,s,t\in [n]$ with $i\neq j$  and  $s\neq t$ such that
\begin{equation}\label{eqn:parallel}
(i,j)\neq (s,t)\neq (j,i),\quad w(i,j)=(s,t),\quad w(s,t)=(i,j),\quad a_{is}+a_{si}\neq a_{jt}+a_{tj},
\end{equation}
 then $\Omega_w(A)$ is empty. Indeed, by (\ref{eqn:eqn}), the following two parallel hyperplanes
 \begin{equation*}
 a_{is}+x_{sj}=x_{it}+a_{tj},\quad a_{si}+x_{it}=x_{sj}+a_{jt},
 \end{equation*} take part in the description of $\Omega_w(A)$.

 For instance,  back to  $A$ in example \ref{ex:1}, if $\tau\in \mx n$ is such that $\tau (1,3)=(2,4)$ and $\tau (2,4)=(1,3)$, then $\Omega_\tau (A)=\emptyset$, because $a_{12}+a_{21}=-10\neq a_{34}+a_{43}=-6$.

Remark 2: Given a winner $w$ and  $i,j\in [n], i\neq j$, if
\begin{equation}\label{eqn:equal_or_sym}
w(i,j)=(i,j) \text{\ or\ } w(i,j)=(j,i),
\end{equation} then equality (\ref{eqn:eqn}) is tautological. In particular,
\begin{equation}\label{eqn:dim}
\dim \Omega_w(A)\le n^2-n-\card  P_w^c,
\end{equation}
where $P_w:=\{(i,j): 1\le i<j\le n \text{\ with \ } w(i,j)=(i,j) \text{\ or\ } w(i,j)=(j,i)\}$ and $^c$ denotes complementary.

\setcounter{thm}{1}
\begin{ex}
(Continued) For $w$, the pairs which do not satisfy (\ref{eqn:equal_or_sym}) are $w(1,2)=(1,1)$, $w(3,2)=(2,2)$ and $w(4,1)=(2,3)$, so that  $P_w^c=\{(1,2), (3,2), (4,1)\}$. It follows that $x_{12}=-4$, $x_{32}=-6$ and $x_{21}=x_{43}$ are some of the equations describing $\Omega_w(A)$. Besides, condition (\ref{eqn:parallel}) is satisfied for no pairs, whence $$0<\dim \Omega_w(A)\le 16-4-3=9.\qed$$
\end{ex}
\setcounter{thm}{4}

Clearly,
\begin{equation}
\Omega(A)=\bigcup_{w\in \mx n} \Omega_w(A)
\end{equation}
and the set $\mx n$ is finite, whence the following corollary is a straightforward consequence of proposition \ref{prop:alcov_w}.

\begin{cor}\label{cor:finite_un}
For any real $A\in\norma n$, $\Omega(A)$ is a finite union of alcoved polytopes. \qed
\end{cor}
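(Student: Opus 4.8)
The plan is to deduce Corollary \ref{cor:finite_un} directly from Proposition \ref{prop:alcov_w} together with the decomposition of $\Omega(A)$ indexed by winning positions. First I would establish the set-theoretic identity $\Omega(A)=\bigcup_{w\in \mx n}\Omega_w(A)$. The inclusion $\supseteq$ is immediate from the definition (\ref{eqn:LwA}), since each $\Omega_w(A)$ is by construction a subset of $\Omega(A)$. For the reverse inclusion, I would take an arbitrary $X\in\Omega(A)$ and, for each pair $i,j\in[n]$ with $i\neq j$, observe that the tropical entry $(AX)_{ij}=\max_{k}(a_{ik}+x_{kj})$ is attained at some index, call it $w(i,j)_1$, and similarly $(XA)_{ij}=\max_{k}(x_{ik}+a_{kj})$ is attained at some index $w(i,j)_2$; since $X$ commutes with $A$, these two maxima are equal. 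Assembling these choices into a matrix $w\in\mx n$ (the diagonal entries being irrelevant) exhibits $X\in\Omega_w(A)$, proving $\subseteq$.

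Next I would invoke the finiteness of the index set: $\mx n$ consists of order $n$ matrices with entries in $[n]^2$ and empty diagonal, so $\card\mx n=(n^2)^{n^2-n}$ is finite. Combining this with Proposition \ref{prop:alcov_w}, which asserts that each $\Omega_w(A)$ is an alcoved polytope, gives at once that $\Omega(A)$ is a finite union of alcoved polytopes. This is exactly the statement of the corollary, so the proof is essentially a one-line consequence once the decomposition is in place.

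I do not expect any genuine obstacle here; the corollary is a bookkeeping conclusion. The only point requiring a modicum of care is the verification of the decomposition $\Omega(A)=\bigcup_w\Omega_w(A)$, specifically the observation that a single matrix $X\in\Omega(A)$ can simultaneously satisfy, for every off-diagonal pair $(i,j)$, a consistent choice of maximizing indices $w(i,j)_1$ and $w(i,j)_2$ realizing the common value $(AX)_{ij}=(XA)_{ij}$; but this is automatic because the maxima defining tropical matrix products are always attained (the index sets are finite), and the choices for distinct pairs $(i,j)$ are independent. Some of the polytopes $\Omega_w(A)$ will of course be empty (cf. Remark 1 following Proposition \ref{prop:alcov_w}), but this causes no difficulty, as a finite union of alcoved polytopes, some possibly empty, is still a finite union of alcoved polytopes. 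Hence:

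\begin{proof}
By definition (\ref{eqn:LwA}), $\Omega_w(A)\subseteq\Omega(A)$ for every $w\in\mx n$, so $\bigcup_{w\in\mx n}\Omega_w(A)\subseteq\Omega(A)$. Conversely, let $X\in\Omega(A)$ and fix $i,j\in[n]$ with $i\neq j$. Since $(AX)_{ij}=\max_{s\in[n]}(a_{is}+x_{sj})$ and the maximum is over a finite set, it is attained at some index which we call $w(i,j)_1$; likewise $(XA)_{ij}=\max_{t\in[n]}(x_{it}+a_{tj})$ is attained at some index $w(i,j)_2$. As $AX=XA$, we have $a_{i,w(i,j)_1}+x_{w(i,j)_1,j}=(AX)_{ij}=(XA)_{ij}=x_{i,w(i,j)_2}+a_{w(i,j)_2,j}$. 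Choosing such indices for every off-diagonal pair and setting the diagonal of $w$ arbitrarily, we obtain $w\in\mx n$ with $X\in\Omega_w(A)$. Hence $\Omega(A)=\bigcup_{w\in\mx n}\Omega_w(A)$. The set $\mx n$ is finite, and by Proposition \ref{prop:alcov_w} each $\Omega_w(A)$ is an alcoved polytope. Therefore $\Omega(A)$ is a finite union of alcoved polytopes.
\end{proof}
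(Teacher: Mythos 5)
Your proof is correct and follows exactly the paper's route: the paper likewise deduces the corollary from the decomposition $\Omega(A)=\bigcup_{w\in W_n}\Omega_w(A)$, the finiteness of $W_n$, and Proposition \ref{prop:alcov_w}. Your only addition is to spell out why every $X\in\Omega(A)$ lies in some $\Omega_w(A)$ (maxima over finite index sets are attained), which the paper leaves implicit.
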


\m
The sets $\Omega_w(A)$ are not too natural. On the contrary, the sets $\Omega^S(A)$ described below are more natural but harder to study.
For any $S\in\norma n$, let
\begin{equation}
\Omega^S(A):=\{X\in \Omega(A): XA=AX=S\},
\end{equation}
so that
\begin{equation}
\Omega(A)=\bigcup_{S\in \norma n} \Omega^S(A)
\end{equation}
is a disjoint union.
For instance,  $B\in\Omega^S(A)$, for $S:=BA$ in example \ref{ex:1}.
We also consider the set
\begin{equation}
\Omega'(A):=\{X\in \Omega(A): XA=AX=X\}.
\end{equation}

It is immediate to see that
\begin{enumerate}\label{list:pro}
\item  $A^{j-1}\in \Omega^{A^j}(A)$, for $j\in[n]$. In particular, $I=A^0\in \Omega^{A}(A)$, i.e.,  $AI=IA=A$.
\item  $A^*\in \Omega'(A)$, i.e., $AA^*=A^*A=A^*$.
\item  $0\in \Omega'(A)$, i.e., $A0=0A=0$.
\end{enumerate}

%
%
%

\begin{prop}\label{prop:entre_potencias}
For any  real $A,B\in\norma n$, if that $A^{n-2}\le B\le A^*$, then $B\in \Omega^{A^*}(A)$.
\end{prop}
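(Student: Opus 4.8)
The plan is to show directly that $AB=A^*$ and $BA=A^*$ under the hypothesis $A^{n-2}\le B\le A^*$, using monotonicity of tropical multiplication together with the Kleene-star stabilization $A^{n-1}=A^n=\cdots=A^*$ from (\ref{eqn:chain}) and Yoeli's theorem. First I would apply monotonicity: from $A^{n-2}\le B$ we get $A\cdot A^{n-2}\le A\cdot B$, i.e. $A^{n-1}\le AB$, and since $A^{n-1}=A^*$ this gives $A^*\le AB$. Symmetrically, $BA\ge A^{n-2}\cdot A=A^{n-1}=A^*$. For the reverse inequalities, from $B\le A^*$ we get $AB\le A A^*=A^{n}=A^*$ (using property 2 in the list on p.~\pageref{list:pro}, namely $AA^*=A^*$), and likewise $BA\le A^* A=A^*$. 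Combining, $AB=BA=A^*$, so $B\in\Omega^{A^*}(A)$.

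The only thing I would be slightly careful about is the edge behavior when some entries are $-\infty$: monotonicity of $\odot=+$ and $\oplus=\max$ extends to $\overline{\R}$ without trouble, so the argument goes through verbatim over $\rrr$, and $B$ being normal is automatic since $I\le A^{n-2}\le B$ gives $I\le B\le 0$. I should also note in passing why $B\in\Omega(A)$ at all — but that is immediate, since we have just shown $AB$ and $BA$ are both equal to $A^*$, hence to each other.

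There is essentially no main obstacle here; the statement is a short consequence of the chain (\ref{eqn:chain}) and the idempotency-type identity $AA^*=A^*A=A^*$. The one point worth flagging is that the hypothesis is stated with exponent $n-2$ rather than $n-1$: this is exactly the content, since if one only assumed $A^{n-1}\le B\le A^*$ then $B=A^*$ and the claim would be trivial, whereas with $A^{n-2}$ one genuinely uses that multiplying once more by $A$ pushes $A^{n-2}$ up to the stable value $A^*$. So the write-up is just the four inequalities above, assembled.
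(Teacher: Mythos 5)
Your proof is correct and follows exactly the paper's argument: monotonicity of tropical multiplication squeezes $AB$ and $BA$ between $A\cdot A^{n-2}=A^{n-1}=A^*$ and $A\cdot A^*=A^n=A^*$ via Yoeli's stabilization. The extra remarks on $-\infty$ entries and on the exponent $n-2$ are fine but not needed, since $A$ and $B$ are assumed real and normal.
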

\begin{proof}
$A^{n-1}=A^n=A^{n+1}=\cdots=A^*$, by Yoeli's theorem, and left or right multiplication by $A$ is monotonic, so that $A^{n-2}\le B\le A^*$ implies $A^*\le AB\le A^*$ and $A^*\le BA\le A^*$.
\end{proof}

Recall $m(A)$ and $M(A)$ defined in (\ref{eqn:m(A)M(A)}). Recall the topology in $\norma n$, defined in p.~\pageref{dfn:topology}.

\m
For $r\in \overline{\R}$, denote by $K(r)=(\alpha_{ij})$ the \emph{constant matrix} such that $\alpha_{ii}=0$ and  $\alpha_{ij}=r$, for all $i\neq j$. For instance, $I=K(-\infty)$ and $0=K(0)$.

\m

\begin{prop}\label{prop:neighI}
For any real $A\in\norma n$, if   $I\le B\le K(m(A))$,  then $B\in \Omega^{A}(A)$. In particular, $\Omega^{A}(A)$ is a neighborhood of the identity matrix $I$.
\end{prop}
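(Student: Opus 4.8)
The plan is to verify the inclusion $\{B:I\le B\le K(m(A))\}\subseteq\Omega^{A}(A)$ by a direct entrywise computation, and then to read off the topological statement from the coordinate description $\norma n\simeq\dom$.

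First I would show that $I\le B\le K(m(A))$ forces $AB=BA=A$. Write $(AB)_{ij}=\max_{k}(a_{ik}+b_{kj})$. The term $k=j$ contributes $a_{ij}+b_{jj}=a_{ij}$ because $B$ is normal, so $(AB)_{ij}\ge a_{ij}$. For every $k\ne j$ one has $a_{ik}\le 0$ and $b_{kj}\le m(A)$ (as $b_{kj}$ is an off-diagonal entry of $B$), hence $a_{ik}+b_{kj}\le m(A)\le a_{ij}$, the last inequality being the definition of $m(A)$ when $i\ne j$ and the trivial bound $m(A)\le 0=a_{ii}$ when $i=j$. Thus $(AB)_{ij}=a_{ij}$ for all $i,j$, i.e. $AB=A$. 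The argument for $BA$ is symmetric: in $(BA)_{ij}=\max_{k}(b_{ik}+a_{kj})$ the index $k=i$ gives $b_{ii}+a_{ij}=a_{ij}$, while any $k\ne i$ gives $b_{ik}+a_{kj}\le m(A)+0\le a_{ij}$. Hence $BA=A$, so $AB=BA=A$ and $B\in\Omega^{A}(A)$.

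For the neighborhood claim I would use the identification $\norma n\simeq\dom$, under which $I$ is the point all of whose coordinates equal $-\infty$ and the condition $I\le B\le K(m(A))$ becomes $-\infty\le b_{ij}\le m(A)$ for all $i\ne j$. If $A\neq 0$ then $m(A)<0$, so $\prod_{i\ne j}[-\infty,m(A))$ is a product of basic open sets of the interval topology, hence an open neighborhood of $I$; it lies in $\{B:I\le B\le K(m(A))\}\subseteq\Omega^{A}(A)$ by the first part. If $A=0$ then $X0=0X=0$ for every $X\in\norma n$, so $\Omega^{A}(A)=\norma n$, trivially a neighborhood of $I$. The only delicate point is this last passage: since the closed intervals $[-\infty,m(A)]$ are not open, one cannot use the box $\prod[-\infty,m(A)]$ directly but must shrink to $[-\infty,m(A))$, which forces the split according to whether $m(A)<0$ (equivalently $A\neq 0$) or $m(A)=0$ (equivalently $A=0$); everything else is routine.
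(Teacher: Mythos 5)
Your proof is correct and follows essentially the same route as the paper: the same entrywise verification that $AB=BA=A$ (paper does $i\neq j$ only, leaving the diagonal to normality of $AB$), and the same box $\{B: I\le B\le K(m(A))\}\simeq[-\infty,m(A)]^{n^2-n}$ for the neighborhood claim. Your extra care in shrinking to the open box $[-\infty,m(A))^{n^2-n}$ and treating $A=0$ separately just makes explicit a point the paper leaves implicit, since a neighborhood need only \emph{contain} an open set around $I$ and the closed box already does.
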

\begin{proof}
 The hypothesis $I\le B\le K(m(A))$ means that $B$ is normal and $b_{ij}\le m(A)$, for all $i\neq j$.

 If $i\neq j$, we have
$(AB)_{ij}=\max_{k\in [n]}a_{ik}+b_{kj}=a_{ij}$, since $a_{ik}+b_{kj}\le a_{ik}+m(A)\le m(A)\le a_{ij}$, when $k\neq j$, and $a_{ij}+b_{jj}=a_{ij}$. Similarly, $(BA)_{ij}=a_{ij}$. This shows $AB=BA=A$, so that $B\in\Omega^A(A)$.

The value $m(A)$ defined in (\ref{eqn:m(A)M(A)}) is real. The set $U=\{B: I\le B< K(m(A))\}$ is in bijective correspondence with
 the Cartesian product of  half--lines $[-\infty,m(A))^{n^2-n}$, which is open. Moreover,  $I\in U\subseteq \Omega^A(A)$, proving the neighborhood condition.
\end{proof}

Notice that $m(A)$ equals $-|||A|||$, as defined in \cite{Puente_kleene}.
  There, it is proved that  $|||A|||$ is the \emph{(tropical) radius} of the section $\{x_n=0\}\cap \spann A$, i.e., the maximal tropical distance to the zero vector, from any point on $\{x_n=0\}\cap \spann A$. This conveys a geometrical meaning to proposition \ref{prop:neighI}.\label{note:norm}

\begin{prop}\label{prop:neigh0}
Suppose that  $A\in\norma n$  is real and strictly normal. If $B$ is such that  $K(M(A))\le B\le0$, then $B\in \Omega'(A)$. In particular, $\Omega'(A)$ is a neighborhood of the zero matrix $0$.
\end{prop}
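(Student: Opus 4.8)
The plan is to mimic the proof of Proposition~\ref{prop:neighI}, exploiting the symmetry between the roles of $m(A)$ and $M(A)$ and between the conditions $AX=XA=A$ and $AX=XA=X$. First I would translate the hypothesis: $K(M(A))\le B\le 0$ means that $B$ is normal (indeed $I\le K(M(A))\le B$, using $M(A)\in\R_{\le0}$, and $B\le 0$) and that $b_{ij}\ge M(A)\ge a_{ij}$ for all $i\neq j$; in other words $B\ge A$ entrywise, with the off-diagonal entries of $B$ dominating \emph{every} off-diagonal entry of $A$.

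Next I would compute $(AB)_{ij}$ for $i\neq j$. We have $(AB)_{ij}=\max_{k\in[n]}\,(a_{ik}+b_{kj})$. The term $k=i$ contributes $a_{ii}+b_{ij}=b_{ij}$. Any other term with $k\neq i$ satisfies $a_{ik}+b_{kj}\le M(A)+0=M(A)\le b_{ij}$, since $k\neq i$ forces $a_{ik}\le M(A)$ (here one uses strict normality only to the extent that $a_{ik}$ is a genuine off-diagonal entry, hence $\le M(A)$; actually normality alone suffices for $a_{ik}\le M(A)$, but strict normality guarantees $M(A)<0$, which is what makes $\Omega'(A)$ genuinely fat rather than possibly degenerate). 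Hence the maximum is attained at $k=i$ and $(AB)_{ij}=b_{ij}$. For $i=j$ both sides are $0$ by normality. Symmetrically, looking at $(BA)_{ij}=\max_{k}(b_{ik}+a_{kj})$, the term $k=j$ gives $b_{ij}+a_{jj}=b_{ij}$ and every other term is $\le 0+M(A)=M(A)\le b_{ij}$, so $(BA)_{ij}=b_{ij}$. Therefore $AB=BA=B$, which is exactly the condition $B\in\Omega'(A)$.

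For the neighborhood assertion, I would note that $M(A)\in\R_{\le0}$ because $A$ is strictly normal (so $M(A)<0$ in fact), and that the set $\SSS'=\{B:K(M(A))\le B\le 0\}$ is carried by the identification $\norma n\simeq\dom$ onto the Cartesian product of half-lines $[M(A),0]^{n^2-n}$, a set which contains the zero matrix $0$ in its interior (with respect to the product topology on $\dom$, i.e.\ the interval topology on each coordinate $[-\infty,0]$) precisely because $M(A)<0=$ the coordinate of $0$. Since $\SSS'\subseteq\Omega'(A)$ by the computation above, $\Omega'(A)$ is a neighborhood of $0$.

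There is essentially no hard part here: the argument is a one-line max computation plus a bookkeeping check that the hypothesis forces normality. The only place requiring a moment's care is the direction of the inequality $b_{ij}\ge a_{ik}+b_{kj}$ for $k\neq i$ — one must make sure that $a_{ik}$ (with $i\neq k$) is an off-diagonal entry of $A$ and hence bounded above by $M(A)\le b_{ij}$, rather than accidentally invoking $a_{ii}=0$; and dually for $(BA)_{ij}$. The strict normality hypothesis is what is actually used to conclude the \emph{neighborhood} statement (it yields $M(A)<0$), not merely the containment $\SSS'\subseteq\Omega'(A)$.
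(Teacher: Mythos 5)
Your proof is correct and follows essentially the same argument as the paper: the same term-by-term estimate $a_{ik}+b_{kj}\le M(A)\le b_{ij}$ for $k\neq i$ (and its mirror for $BA$), and the same identification of $\{B: K(M(A))\le B\le 0\}$ with $[M(A),0]^{n^2-n}$ for the neighborhood claim. Your explicit remark that strict normality is what gives $M(A)<0$ and hence makes $0$ an interior point is a welcome clarification of a step the paper leaves implicit.
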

\begin{proof}
We have $M(A)<0$, by strict normality. The hypothesis on $B=(b_{ij})$ means that $M(A)\le b_{ij}$, for  every $i,j\in [n]$ with $i\neq j$.

For  $i\neq j$, we get
$(AB)_{ij}=\max_{k\in [n]}a_{ik}+b_{kj}=b_{ij}$, since $a_{ik}+b_{kj}\le M(A)+b_{kj}\le M(A)\le b_{ij}$, when $k\neq i$, and $a_{ii}+b_{ij}=b_{ij}$. Similarly, $(BA)_{ij}=b_{ij}$. This shows $AB=BA=B$, so that $B\in\Omega'(A)$.

The set $U=\{B: K(M(A))< B\le 0\}$ is in bijective correspondence with
 the Cartesian product of  half--lines $(M(A),0]^{n^2-n}$, which is open. Moreover,  $0\in U\subseteq \Omega'(A)$, proving the neighborhood condition.
\end{proof}

Note that the former proposition is analogous to proposition \ref{prop:neighI}, with the zero matrix  playing  the role of the identity matrix.

\bigskip
Below we describe the sets $\Omega^{A}(A)$ and $\Omega'(A)$ as finite union of alcoved  polytopes. In order to do so, for $i\in[n]$, consider the matrices
\begin{itemize}
\item  $R^i_A =(r^i_{kj})$, with $r^i_{kj}=a_{ij}-a_{ik}$ (difference in $i$--th row;  subscripts  $k,j$ get inverted),
\item $C^i_A =(c^i_{kj})$, with $c^i_{kj}=a_{ki}-a_{ji}$ (difference in $i$--th column; subscripts  $k,j$ don't get inverted).
\end{itemize}
Let $\oplus'$ denote $\min$. Write $R:=\bigoplus'_{i\in[n]}R^i_A$ and $C:=\bigoplus'_{i\in[n]} C^i_A$ and consider
\begin{equation}\label{eqn:underl}
\underline{A}:=R\oplus' C=A\oplus' R\oplus' C,
\end{equation}
the last equality being true since  $r^{i}_{ij}=a_{ij}$ and  $c^{j}_{kj}=a_{kj}$, by normality of $A$. Clearly, $\underline{A}\le A$ and $\underline{A}$ is real and normal, if $A$ is.
\m

Notation: $(\leftarrow,\underline{A}]:=\{X\in\norma n :X\le \underline{A}\}$. This is an alcoved polytope of dimension $n^2-n$.

\begin{thm}\label{thm:alcov^A}
For any real $A\in \norma n$, $\Omega^A(A)$ is a finite  union of alcoved polytopes. Moreover,
$$\Omega_{\tr}(A)\subseteq \Omega^A(A)\subseteq (\leftarrow, \underline{A}].$$
\end{thm}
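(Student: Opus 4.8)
The plan is to reduce the statement to three separate claims: first, that $\Omega^A(A)$ is a finite union of alcoved polytopes; second, that $\Omega_{\tr}(A)\subseteq\Omega^A(A)$; and third, that $\Omega^A(A)\subseteq(\leftarrow,\underline A]$. For the first claim I would exhibit $\Omega^A(A)$ as a union over winners $w$ of the sets $\Omega_w^A(A):=\Omega_w(A)\cap\Omega^A(A)$, each of which is cut out from the alcoved polytope $\Omega_w(A)$ (Proposition \ref{prop:alcov_w}) by the additional \emph{linear} equations $a_{i,w(i,j)_1}+x_{w(i,j)_1,j}=a_{ij}$ for all $i\neq j$ coming from the requirement $AX=A$ (together with the mirror equations from $XA=A$). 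Since imposing finitely many equations of the form $x_{k\ell}=\text{const}$ and $x_{k\ell}-x_{pq}=\text{const}$ on an alcoved polytope yields an alcoved polytope, and $\mx n$ is finite, this gives the first claim. Alternatively, and perhaps more cleanly, I would argue directly: $\Omega^A(A)$ is the set of normal $X$ with $AX=A$ and $XA=A$; for each $i\neq j$ the condition $(AX)_{ij}=a_{ij}$ says $\max_k(a_{ik}+x_{kj})=a_{ij}$, which is a conjunction over the finitely many choices of the argmax index $k$ of one linear equation plus $n-1$ linear inequalities — again a finite union of alcoved polytopes, and intersecting the two families (from $AX=A$ and $XA=A$) and with $I\le X\le 0$ keeps us in that class.

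For the inclusion $\Omega_{\tr}(A)\subseteq\Omega^A(A)$: by Example \ref{ex:2} we have $I\in\Omega_{\tr}(A)$, but I need every $X\in\Omega_{\tr}(A)$ to satisfy $AX=XA=A$, not just $I$. If $X\in\Omega_{\tr}(A)$ then by definition $(AX)_{ij}=a_{ij}+x_{ji}$ and $(XA)_{ij}=x_{ij}+a_{ji}$ for $i\neq j$; since $X$ is normal, $x_{ji}\le 0$ and $x_{ij}\le 0$, so $(AX)_{ij}\le a_{ij}$ and $(XA)_{ij}\le a_{ij}$. On the other hand (\ref{eqn:chain})-type monotonicity — more precisely, taking the $k=j$ term in $AX$ and the $k=i$ term in $XA$ and using $x_{jj}=x_{ii}=0$ — gives $(AX)_{ij}\ge a_{ij}+x_{jj}=a_{ij}$ and $(XA)_{ij}\ge x_{ii}+a_{ij}=a_{ij}$; on the diagonal everything is $0$. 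Hence $AX=XA=A$, i.e. $X\in\Omega^A(A)$.

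For the inclusion $\Omega^A(A)\subseteq(\leftarrow,\underline A]$: let $X\in\Omega^A(A)$, so $AX=XA=A$. I must show $x_{kj}\le\underline a_{kj}$ for all $k,j$, where $\underline A=R\oplus'C$. From $(AX)_{ij}=a_{ij}$ and the fact that $(AX)_{ij}=\max_s(a_{is}+x_{sj})\ge a_{ik}+x_{kj}$, we get $x_{kj}\le a_{ij}-a_{ik}=r^i_{kj}$ for every $i$; taking the min over $i$ gives $x_{kj}\le R_{kj}$. Symmetrically, from $(XA)_{kj}=a_{kj}$ and $(XA)_{kj}\ge x_{ki}+a_{ij}$ applied with the roles arranged to isolate $x_{kj}$ — namely $(XA)_{kj}=\max_t(x_{kt}+a_{tj})\ge x_{kj}+a_{jj}=x_{kj}$ is too weak, so instead I use that for the column-type bound one looks at $(XA)$ at a different index: $(XA)_{kj}\ge x_{ki}+a_{ij}$ is not what is wanted either. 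The correct symmetric computation: consider $(XA)_{ij}=a_{ij}$; then $a_{ij}\ge x_{ik}+a_{kj}$ is again wrong-indexed. Let me instead extract the $C$ bound from $AX=A$ read in column form, or from $XA=A$ directly: $(XA)_{kj}=\max_t (x_{kt}+a_{tj})=a_{kj}\ge x_{ki}+a_{ij}$... The clean statement is $x_{kt}\le a_{kj}-a_{tj}$ for all $j$, giving (after renaming) $x_{kj}\le a_{ki}-a_{ji}=c^i_{kj}$ for all $i$, hence $x_{kj}\le C_{kj}$. Combining, $x_{kj}\le\min(R_{kj},C_{kj})=\underline a_{kj}$, which is exactly $X\le\underline A$.

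I expect the \emph{main obstacle} to be bookkeeping the index gymnastics in the third inclusion — making sure the row-difference bound really comes from $AX=A$ and the column-difference bound really comes from $XA=A$, with the $k\leftrightarrow j$ inversion in $r^i_{kj}$ versus the non-inversion in $c^i_{kj}$ handled correctly — and, for the first claim, being careful that "conjunction of a linear equation with linear inequalities, over finitely many argmax branches" genuinely stays within the alcoved class rather than producing general polytopes; this is fine because all the equations and inequalities involve only single variables $x_{k\ell}$ or differences $x_{k\ell}-x_{pq}$ of matrix entries, which is precisely the alcoved format once $\norma n$ is identified with $\dom$.
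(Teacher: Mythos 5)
Your proposal is correct and follows essentially the same route as the paper: branch over the finitely many argmax positions (winners) in $\max_k(a_{ik}+x_{kj})=a_{ij}$ and $\max_k(x_{ik}+a_{kj})=a_{ij}$ to obtain a finite union of alcoved polytopes, observe that the transposition winner forces $AX=XA=A$, and read off $x_{kj}\le r^i_{kj}$ from $AX=A$ and $x_{kj}\le c^i_{kj}$ from $XA=A$ to conclude $X\le\underline{A}$; the paper states these last two steps without the detail you supply. (One cosmetic slip: for $w=\tr$ the winning terms are $a_{ij}+x_{jj}$ and $x_{ii}+a_{ij}$, not $a_{ij}+x_{ji}$ and $x_{ij}+a_{ji}$, but your two-sided bound reaches the right conclusion regardless.)
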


\begin{proof}
$AX=XA=A$ if and only if
\begin{equation}\label{eqn:alcov^A}
\max_{k\in[n]}a_{ik}+x_{kj}=a_{ij},\quad \max_{k\in[n]}x_{ik}+a_{kj}=a_{ij}, \text{\ for\ } i,j\in [n], i\neq j.
\end{equation}
Now, for each $X=(x_{ij})\in\Omega^A(A)$ there exists some winner $w_X$ such that, for each pair $(i,j)$ with $i\neq j$, the maxima in (\ref{eqn:alcov^A}) are attained at $w_X(i,j)$. Since $W_n$ is finite,
 then (\ref{eqn:alcov^A}) describe a finite  union of alcoved polytopes in the variables $x_{ij}$. Moreover,  $X\le \underline{A}$ follows from (\ref{eqn:underl}) and (\ref{eqn:alcov^A}). In addition,  the maxima in (\ref{eqn:alcov^A}) are attained, at least, for the transposition operator. Therefore, $\Omega_{\tr}(A)\subseteq \Omega^A(A)$.
\end{proof}

\begin{alg} To compute $\underline{A}$, we proceed as follows:  for $1\le i<j \le n$,
\begin{itemize}
\item  compute the minimum and maximum of $\row (A,i)-\row(A,j)$, denoted $\mr_{ij}$ and $\MR_{ij}$, respectively,
\item  compute the minimum and maximum of $\col (A,i)-\col(A,j)$, denoted $\mc_{ij}$ and $\MC_{ij}$, respectively,
\item $\underline{A}_{ij}=\min\{a_{ij}, \mr_{ij}, -\MC_{ij}\}$,
\item $\underline{A}_{ji}=\min\{a_{ji}, -\MR_{ij}, \mc_{ij}\}$.
\end{itemize}
A \emph{sorting algorithm} is needed to compute $\mr_{ij}, \mc_{ij}, \MR_{ij}, \MC_{ij}$. For instance, \emph{Mergesort} has $O(n\log n)$ complexity, whence the complexity of the computation of $\underline{A}$ is $O(n^3\log n)$.
\end{alg}

\begin{ex} \label{ex:3}
For
\begin{equation}\label{eqn:3x3}
B=\left[\begin{array}{rrr}
0&-3&-1\\
-4&0&-6\\
-5&0&0
\end{array}\right] \text{\ we\ get\quad }
\underline{B}=\left[\begin{array}{rrr}
0&-3&-3\\
-5&0&-6\\
-5&-2&0
\end{array}\right].
\end{equation}
On the other hand, for $A$ in example \ref{ex:1},  we get $A=\underline{A}$.\qed
\end{ex}

Notation: $\underline{[A,\rightarrow)}:=\{X\in\norma n :A\le \underline{X}\}$. It is an alcoved polytope, since the definition of $\underline{X}$ involves differences $x_{ij}-x_{kl}$ of two entries.

\m
The proof of the theorem below is similar to the proof of theorem \ref{thm:alcov^A}. Alternatively, theorem \ref{thm:alcov'} is a corollary of theorem \ref{thm:alcov^A}, using that   $X\in \Omega^{A}(A)$ if and only if $A\in \Omega'(X)$.

\begin{thm}\label{thm:alcov'}
For any real $A\in \norma n$, $\Omega'(A)$ is a finite  union of alcoved polytopes. Moreover,
$$\Omega_{\id}(A)\subseteq\Omega'(A)\subseteq\underline{[A,\rightarrow)}.\qed$$
\end{thm}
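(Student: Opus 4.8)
The plan is to mirror the proof of Theorem \ref{thm:alcov^A}, exploiting the duality $X\in\Omega^A(A)\iff A\in\Omega'(X)$ that the excerpt already points to. Fix a real $A\in\norma n$. The defining condition $AX=XA=X$ for $X\in\Omega'(A)$ unwinds, for $i,j\in[n]$ with $i\ne j$, to
\begin{equation*}
\max_{k\in[n]}\bigl(a_{ik}+x_{kj}\bigr)=x_{ij},\qquad \max_{k\in[n]}\bigl(x_{ik}+a_{kj}\bigr)=x_{ij}.
\end{equation*}
(For $i=j$ both sides are $0$ automatically, by normality.) First I would observe that, exactly as in Theorem \ref{thm:alcov^A}, for each $X\in\Omega'(A)$ there is a winner $w_X\in\mx n$ at which both maxima are attained, for every off-diagonal pair $(i,j)$. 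For a \emph{fixed} winner $w$, the equality ``max attained at $w(i,j)$'' plus the accompanying $2n-2$ ``$\le$'' inequalities per pair are, after cancelling the constants $a_{ik}$ on the appropriate side, all of the form ``affine-in-the-$x$'s $=$ affine-in-the-$x$'s'' and ``$\le$'', and each such relation involves at most a single difference $x_{pq}-x_{rs}$ of two entries of $X$ together with constants coming from $A$. Hence the solution set for that fixed $w$ is an alcoved polytope in $\dom\simeq\norma n$, and $\Omega'(A)=\bigcup_{w\in\mx n}\Omega_w'(A)$ is a finite union of alcoved polytopes since $\mx n$ is finite. This is the same bookkeeping as in Proposition \ref{prop:alcov_w} and Theorem \ref{thm:alcov^A}, so I would keep it brief.

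For the two inclusions: the maxima above are always attained (in particular) at the index $k=i$ in the first equation and $k=j$ in the second, i.e. at the identity operator, so $\Omega_{\id}(A)\subseteq\Omega'(A)$ — this recovers item 2/3 of the list on p. \pageref{list:pro} ($A^*$ and $0$ both satisfy $AX=XA=X$ via $w=\id$). For the upper bound $\Omega'(A)\subseteq\underline{[A,\rightarrow)}$, I would use the stated equivalence $X\in\Omega^A(X')\iff X'\in\Omega'(X)$ with the roles swapped: if $X\in\Omega'(A)$ then $A\in\Omega^X(X)$, and Theorem \ref{thm:alcov^A} applied to the matrix $X$ gives $A\le\underline{X}$, which is precisely the condition defining $\underline{[A,\rightarrow)}$. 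Alternatively, and more self-containedly, one can read off $A\le\underline{X}$ directly from the displayed equations: the first equation forces $a_{ik}+x_{kj}\le x_{ij}$, i.e. $a_{ik}\le x_{ij}-x_{kj}$, and the second forces $a_{kj}\le x_{ik}-x_{ij}$, for all admissible $k$; taking minima over $k$ and matching these against the row/column-difference quantities $r^i,c^i$ and the entries of $\underline{X}$ as defined in (\ref{eqn:underl}) yields $a_{pq}\le\underline{X}_{pq}$ for every off-diagonal $(p,q)$ (the diagonal being equality $0=0$).

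I expect the only genuinely delicate point to be the very last matching: checking that the inequalities ``$a_{ik}\le x_{ij}-x_{kj}$ for all $k$'' and ``$a_{kj}\le x_{ik}-x_{ij}$ for all $k$'' assemble, after relabelling indices, into exactly $a_{pq}\le\min\{x_{pq},\ \mathrm{(row\ differences)},\ -\mathrm{(column\ differences)}\}=\underline{X}_{pq}$, with the subscript inversions in $R^i_X$ versus $C^i_X$ handled correctly (this is the mild index-juggling flagged in the definitions of $R^i_A$ and $C^i_A$). Since the cleanest route is simply to cite the $\Omega^A$/$\Omega'$ duality and invoke Theorem \ref{thm:alcov^A}, I would present that as the main argument and relegate the direct verification to a one-line remark. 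No step requires more than the monotonicity of tropical multiplication and the finiteness of $\mx n$, both already in hand.
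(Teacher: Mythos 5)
Your proposal is correct and follows essentially the same route as the paper, which disposes of this theorem in a one-line remark: either repeat the argument of Theorem \ref{thm:alcov^A} mutatis mutandis, or deduce it from that theorem via the duality $X\in\Omega^{A}(A)\iff A\in\Omega'(X)$ --- and you carry out both. The only point neither you nor the paper makes explicit is that applying Theorem \ref{thm:alcov^A} to $X$ requires $X$ to be real, which holds automatically: $(AX)_{ij}\ge a_{ij}+x_{jj}=a_{ij}>-\infty$ forces every $X\in\Omega'(A)$ to be real when $A$ is.
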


\m

The sets $(\leftarrow, \underline{A}]$ and $\underline{[A,\rightarrow)}$ are alcoved polytopes, but $\underline{[A,\rightarrow)}$ is trickier than $(\leftarrow, \underline{A}]$. We can compute a \emph{tight description} of any of them, as explained in \cite{Puente_kleene}. It goes as follows.
For any $m\in \N$, any real matrix $H\in\norma m$ yields the alcoved polytope $C_H$ (see (\ref{eqn:CA})),
and it turns out that $C_H=C_{H^*}$. Moreover, the description of this convex set given by $H^*$ is \emph{tight}.

\setcounter{thm}{10}
\begin{ex} (Continued)
Let us compute a tight description of $\underline{[B,\rightarrow)}$, for $B$ in (\ref{eqn:3x3}). The matrix $\underline{X}$ is defined in (\ref{eqn:underl}) and we have $B\le \underline{X}$ if and only if

\begin{alignat*}{2}
-3&\le x_{12}&\qquad\qquad -6&\le x_{23}\\
-3&\le x_{32}-x_{31}&\qquad\qquad -6&\le x_{13}-x_{12}\\
-3&\le x_{13}-x_{23}&\qquad\qquad -6&\le x_{21}-x_{31}\\
-1&\le x_{13}&\qquad\qquad        -5&\le x_{31}\\
-1&\le x_{23}-x_{21}&\qquad\qquad -5&\le x_{21}-x_{23}\\
-1&\le x_{12}-x_{32}&\qquad\qquad -5&\le x_{32}-x_{12}\\
-4&\le x_{21}&\qquad\qquad        0&\le x_{32}\\
-4&\le x_{31}-x_{32}&\qquad\qquad 0&\le x_{12}-x_{13}\\
-4&\le x_{23}-x_{13}&\qquad\qquad 0&\le x_{31}-x_{21}.
\end{alignat*}
Now, in order to write down the matrix $H$,   we perform a \emph{relabeling} of  the unknowns; for instance:
\begin{equation*}
y_{1}=x_{12},\ y_{2}=x_{13},\ y_{3}=x_{21},\ y_{4}=x_{23},\ y_{5}=x_{31},\ y_{6}=x_{32},
\end{equation*}
so that,
\begin{alignat*}{2}\label{eqn:alcov'}
-3&\le y_{1}&\qquad\qquad 0&\le y_{1}-y_{2}\le6\\
-1&\le y_{2}&\qquad\qquad -1&\le y_{1}-y_{6}\le5\\
-4&\le y_{3}&\qquad\qquad -3&\le y_{2}-y_{4}\le4\\
-6&\le y_{4}&\qquad\qquad -5&\le y_{3}-y_{4}\le1\\
-5&\le y_{5}&\qquad\qquad -6&\le y_{3}-y_{5}\le0\\
0&\le y_{6}&\qquad\qquad  -4&\le y_{5}-y_{6}\le3
\end{alignat*}
and we get $\underline{[B,\rightarrow)}=C_H$, with
\begin{equation*}
H=\left[\begin{array}{rrrrrrr}
0&0&-\infty&-\infty&-\infty&-1&-3\\
-6&0&-\infty&-3&-\infty&-\infty&-1\\
-\infty&-\infty&0&-5&-6&-\infty&-4\\
-\infty&-4&-1&0&-\infty&-\infty&-6\\
-\infty&-\infty&0&-\infty&0&-4&-5\\
-5&-\infty&-\infty&-\infty&-3&0&0\\
0&0&0&0&0&0&0
\end{array}\right].
\end{equation*}
Then $H^3=H^4=H^*$, with
\begin{equation*}
H^*=\left[\begin{array}{rrrrrrr}
0&0&-1&-1&-1&-1&-1\\
-1&0&-1&-1&-1&-1&-1\\
-4&-4&0&-4&-4&-4&-4\\
-5&-4&-1&0&-5&-5&-5\\
-4&-4&0&-4&0&-4&-4\\
0&0&0&0&0&0&0\\
0&0&0&0&0&0&0
\end{array}\right]
\end{equation*}
so that  $\underline{[B,\rightarrow)}=C_H=C_{H^*}$, by \cite{Puente_kleene}, and this set is described \emph{tightly} as follows:
\begin{alignat*}{2}
-1&\le y_{1} \le 0&\qquad\qquad -1&\le y_{1}-y_{4}\le 5\\
-1&\le y_{2} \le 0&\qquad\qquad -1&\le y_{1}-y_{5}\le 4\\
-4&\le y_{3} \le 0&\qquad\qquad -1&\le y_{2}-y_{3}\le 4\\
-5&\le y_{4} \le 0&\qquad\qquad -1&\le y_{2}-y_{4}\le 4\\
-4&\le y_{5} \le 0&\qquad\qquad -1&\le y_{2}-y_{5}\le 4\\
0&=y_{6}&\qquad\qquad -4&\le y_{3}-y_{4}\le 1\\
 0&\le y_{1}-y_{2}\le 1&\qquad\qquad -4&\le y_{3}-y_{5}\le 0\\
-1&\le y_{1}-y_{3}\le 4&\qquad\qquad -5&\le y_{4}-y_{5}\le 4.
\end{alignat*}
In particular, $\dim \underline{[B,\rightarrow)}=\dim C_{H^*}=9-3-1=5$.  Undoing the relabeling, we get
\begin{alignat*}{2}
-1&\le x_{12} \le 0&\qquad\qquad -1&\le x_{12}-x_{23}\le 5\\
-1&\le x_{13} \le 0&\qquad\qquad -1&\le x_{12}-x_{31}\le 4\\
-4&\le x_{21} \le 0&\qquad\qquad -1&\le x_{13}-x_{21}\le 4\\
-5&\le x_{23} \le 0&\qquad\qquad -1&\le x_{13}-x_{23}\le 4\\
-4&\le x_{31} \le 0&\qquad\qquad -1&\le x_{13}-x_{31}\le 4\\
0&=x_{32}&\qquad\qquad -4&\le x_{21}-x_{23}\le 1\\
 0&\le x_{12}-x_{13}\le 1&\qquad\qquad -4&\le x_{21}-x_{31}\le 0\\
-1&\le x_{12}-x_{21}\le 4&\qquad\qquad -5&\le x_{23}-x_{31}\le 4.
\end{alignat*}
Write
\begin{equation}\label{eqn:B_overl}
\overline{B}=\left[\begin{array}{rrr}
0&-1&-1\\-4&0&-5\\-4&0&0
\end{array}\right]
\end{equation}
and notice that $\overline{B}\le X$ follows from the first six inequalities above.
\end{ex}
\setcounter{thm}{12}

Computations  as in the former example can be  done for any real matrix $A\in\norma n$, as follows.

\begin{dfn}\label{dfn:relabeling}
For $n\in\N$, a \emph{relabeling} is a bijection  between two sets of variables:  $\{x_{ij}: (i,j)\in [n]^2,\ i\neq j\}$ and $\{y_k: k\in[n^2-n]\}$. By abuse of notation, we  write $y_k=x_{ij}$, for corresponding $y_k$ and $x_{ij}$.
\end{dfn}

\begin{dfn}\label{dfn:overl}
Given $A\in\norma n$ real, suppose that $\underline{[A,\rightarrow)}$ equals $C_{H^*}$, for some idempotent  matrix $H^*=(h^*_{ij})\in \norma  {n^2-n+1}$  and  some relabeling $y_k=x_{ij}$.
Then $\overline{A}=(\alpha_{ij})\in \norma n$,
with $\alpha_{ij}=h^*_{k,n^2-n+1}$, i.e., the entries of $\overline{A}$ are obtained form the last column of $H^*$.
\end{dfn}

The matrix $\overline{A}$ does not depend on the relabeling. The arithmetical complexity of computing $\overline{A}$ is that of $H^*$, which is $O((n^2-n)^3)=O(n^6)$, by the Floyd--Warshall algorithm.

\begin{cor}\label{cor:neces_cond}
For any $A,X\in\norma n$ with $A$ real, $A\le \underline{X}$ implies $\overline{A}\le X$. In particular, $\Omega'(A)\subseteq [\overline{A},\rightarrow)$.
\end{cor}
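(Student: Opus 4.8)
The plan is to unwind Definition \ref{dfn:overl} and read the inequality $\overline{A}\le X$ directly off the defining inequalities of the alcoved polytope $\underline{[A,\rightarrow)}$. By that definition there is a relabeling $y_k=x_{ij}$ and an idempotent (Kleene star) matrix $H^*=(h^*_{ij})\in\norma{n^2-n+1}$ with $\underline{[A,\rightarrow)}=C_{H^*}$, and the entries of $\overline{A}=(\alpha_{ij})$ are $\alpha_{ij}=h^*_{k,\,n^2-n+1}$ whenever $y_k=x_{ij}$.

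First I would apply the description (\ref{eqn:CA}) of $C_{H^*}$, with the ``$n$'' there replaced by $n^2-n+1$: every point $y=(y_1,\dots,y_{n^2-n})\in C_{H^*}$ satisfies, among its other defining inequalities, the one-variable lower bound $h^*_{k,\,n^2-n+1}\le y_k$ for each $k\in[n^2-n]$. Now $A\le\underline X$ is, by definition, exactly the statement $X\in\underline{[A,\rightarrow)}=C_{H^*}$; hence $\alpha_{ij}=h^*_{k,\,n^2-n+1}\le y_k=x_{ij}$ for all $i\neq j$, while $\alpha_{ii}=0=x_{ii}$ since both matrices are normal. Therefore $\overline{A}\le X$. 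For the final assertion, Theorem \ref{thm:alcov'} gives $\Omega'(A)\subseteq\underline{[A,\rightarrow)}$, so the inclusion just proved yields $\Omega'(A)\subseteq[\overline{A},\rightarrow):=\{X\in\norma n:\overline{A}\le X\}$.

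The only delicate point --- and the step I would be most careful to state correctly --- is that $\underline{[A,\rightarrow)}$ may legitimately be presented as $C_{H^*}$: transcribing the inequalities $A\le\underline X$ and relabeling produces $\underline{[A,\rightarrow)}=C_H$ for a normal matrix $H$, and then $C_H=C_{H^*}$ by \cite{Puente_kleene}, which is exactly what lets us use the entries of the Kleene star $H^*$ (from which $\overline{A}$ is built) rather than those of $H$. Note that no appeal to the \emph{tightness} of the $H^*$-description is needed here: the bound $\alpha_{ij}\le x_{ij}$ is literally one of the defining inequalities of the alcoved polytope $C_{H^*}$, so it holds at every point regardless of whether it is redundant. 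Tightness (also from \cite{Puente_kleene}) would only be relevant if one wished to show that $\overline{A}$ is the \emph{largest} matrix with this property, which the corollary does not claim.
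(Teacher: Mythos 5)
Your proof is correct and follows essentially the same route as the paper, which simply instructs the reader to proceed as in the worked example (transcribe the inequalities $A\le\underline{X}$, relabel the variables to obtain $\underline{[A,\rightarrow)}=C_H=C_{H^*}$, and read off the one-variable lower bounds $h^*_{k,\,n^2-n+1}\le y_k$) and then invoke Theorem \ref{thm:alcov'} for the inclusion $\Omega'(A)\subseteq\underline{[A,\rightarrow)}$. Your added observation that only the set equality $C_H=C_{H^*}$ from \cite{Puente_kleene}, and not the tightness of the $H^*$-description, is actually needed for this direction is correct and a useful clarification.
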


\begin{proof}
We proceed as in example above and we use theorem \ref{thm:alcov'}.
\end{proof}

\begin{cor}\label{cor:dimL'}
Given $A\in\norma n$ real, suppose that  $\underline{[A,\rightarrow)}$ equals $C_{H^*}$,  for some idempotent  matrix $H^*=(h^*_{ij})\in \norma {n^2-n+1}$. Then
$$\dim \Omega'(A)\le n^2-n-\card Q,$$
 where $Q=\{(i,n^2-n+1): h^*_{i,n^2-n+1}=h^*_{n^2-n+1,i}=0,\text{\ with \ } 1\le i<n^2-n+1\}\cup \{(i,k): h^*_{ik}=h^*_{ki}=0, \text{\ with \ } 1\le i< k\le n^2-n+1\}$.
\end{cor}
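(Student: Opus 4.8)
The plan is to derive the bound from the inclusion of theorem \ref{thm:alcov'} together with a dimension count on the tight description of $C_{H^*}$. Write $m:=n^2-n+1$, so that $C_{H^*}$ lives in $\R^{m-1}$ with $m-1=n^2-n$. First I would invoke theorem \ref{thm:alcov'}: $\Omega'(A)\subseteq\underline{[A,\rightarrow)}$, and $\underline{[A,\rightarrow)}=C_{H^*}$ by hypothesis; hence $\dim\Omega'(A)\le\dim C_{H^*}$, and it suffices to prove $\dim C_{H^*}\le n^2-n-\card Q$.

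Next I would use that, $H^*$ being idempotent, the description of $C_{H^*}$ it provides --- $h^*_{i,m}\le y_i\le-h^*_{m,i}$ for $i\in[m-1]$, and $h^*_{i,k}\le y_i-y_k\le-h^*_{k,i}$ for $i\neq k\in[m-1]$ --- is \emph{tight}, in the sense recalled above following \cite{Puente_kleene}. In a tight description one of these two--sided constraints degenerates to an equality exactly when its bounds coincide, that is when $h^*_{i,m}=-h^*_{m,i}$, resp.\ $h^*_{i,k}=-h^*_{k,i}$; since $H^*$ is normal all its entries are $\le0$, so the degeneracy occurs precisely when $h^*_{i,m}=h^*_{m,i}=0$, resp.\ $h^*_{i,k}=h^*_{k,i}=0$ --- i.e.\ exactly for the pairs collected in $Q$ (with the convention $y_m:=0$, so that the first family of pairs accounts for the degenerate bounds $y_i=0$ and the second for the degenerate bounds $y_i=y_k$). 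Tightness also guarantees that no other constraint is degenerate, so the affine hull of $C_{H^*}$ inside $\R^{m-1}$ is carved out precisely by the $\card Q$ equalities indexed by $Q$.

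The crux --- and what I expect to be the main obstacle --- is turning ``$\card Q$ degenerate constraints'' into ``$\card Q$ independent equations,'' so as to conclude $\dim C_{H^*}=n^2-n-\card Q$. I would handle it combinatorially: regard $Q$ as the edge set of a graph $G$ on the vertex set $\{1,\ldots,m\}$, where an edge $\{i,m\}$ encodes $y_i=0$ and an edge $\{i,k\}$ with $i,k<m$ encodes $y_i=y_k$; the codimension of $C_{H^*}$ in $\R^{m-1}$ is then $m$ minus the number of connected components of $G$. Now the triangle inequality for the Kleene star --- from $h^*_{ik}=h^*_{ki}=0$ and $h^*_{kl}=h^*_{lk}=0$ one gets $0\ge h^*_{il}\ge h^*_{ik}+h^*_{kl}=0$, and symmetrically $h^*_{li}=0$ --- shows each connected component of $G$ is a clique, so a component on $p$ vertices supplies $\binom{p}{2}$ of the pairs in $Q$ while accounting for only $p-1$ units of codimension; one therefore has to check that every component is a single edge (equivalently, no rigid triple occurs among the $y$'s), in which case $\card Q$ equals the codimension and $\dim\Omega'(A)\le\dim C_{H^*}=n^2-n-\card Q$ follows. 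This is exactly what happens in the worked example, where $G$ is the single edge $\{6,7\}$ and $\dim C_{H^*}=6-1=5$. Everything else --- the inclusion from theorem \ref{thm:alcov'}, the identification of the degenerate constraints with the pairs of $Q$, and the tightness input of \cite{Puente_kleene} --- is routine bookkeeping.
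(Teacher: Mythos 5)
Your route is the same as the paper's: the inclusion $\Omega'(A)\subseteq\underline{[A,\rightarrow)}=C_{H^*}$ from theorem \ref{thm:alcov'}, followed by a dimension count on the tight description of $C_{H^*}$, in which each pair $(i,k)$ with $h^*_{ik}=h^*_{ki}=0$ collapses a two--sided constraint of (\ref{eqn:CA}) into an equality. Up to that point your bookkeeping agrees with the paper's proof, which simply asserts that the dimension ``drops by one unit each time'' such a collapse occurs and concludes $\dim\underline{[A,\rightarrow)}=n^2-n-\card Q$.

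The step you single out as the crux is, however, a genuine gap, and you do not close it. As you correctly observe, the zero--pairs of $H^*$ form a graph $G$ whose connected components are cliques (by idempotency, $0\ge h^*_{il}\ge h^*_{ik}+h^*_{kl}=0$), so a component on $p$ vertices contributes $\binom{p}{2}$ elements to $Q$ but only $p-1$ independent equalities among the $y$'s. Hence in general one only obtains $\dim C_{H^*}=n^2-n-\sum_c(p_c-1)\ge n^2-n-\card Q$, which is the \emph{wrong direction}: the corollary's bound is potentially smaller than $\dim C_{H^*}$, and the inclusion $\Omega'(A)\subseteq C_{H^*}$ alone cannot deliver it. Your proposed remedy --- ``check that every component is a single edge'' --- is not something that can simply be checked: nothing in the hypotheses visibly forbids a zero--clique of size $\ge3$ in $H^*$ (three coordinates forced equal, $y_i=y_k=y_l$), so either one must prove that such cliques cannot occur for the particular Kleene stars describing $\underline{[A,\rightarrow)}$, or the count $\card Q$ must be replaced by the rank $\sum_c(p_c-1)$ of the system of forced equalities. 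To be fair, the paper's own one--line proof has exactly the same lacuna --- it treats each element of $Q$ as an independent unit of codimension without justification --- so you have put your finger on a weakness of the published argument rather than introduced a new error; but as a proof of the corollary as stated, your proposal is incomplete at precisely the point you identify.
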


\begin{proof}
The description of $\underline{[A,\rightarrow)}$ via $H^*$ is tight, by proposition 2.6 in \cite{Puente_kleene}. Thus, the dimension of $\underline{[A,\rightarrow)}$ drops by one unit each time that a chain of two inequalities in expression (\ref{eqn:CA}) (for $H^*$ instead of $A$), turns into two equalities, which occurs whenever $h_{ik}^*=h_{ki}^*=0$, by normality of $H^*$. Thus, $\dim \underline{[A,\rightarrow)}=n^2-n-\card Q$ and this is an upper bound for $\dim \Omega'(A)$.
\end{proof}

\begin{prop}\label{prop:ineq_bars}
For any $A\in \norma n$ real, we have $\underline{A}\le A\le \overline{A}$.
\end{prop}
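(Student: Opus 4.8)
The plan is to prove the two inequalities $\underline{A}\le A$ and $A\le\overline{A}$ separately. The first is immediate and was in fact already noted right after (\ref{eqn:underl}): by definition $\underline{A}=A\oplus' R\oplus' C$ with $\oplus'=\min$, so $\underline{A}\le A$ entrywise. The same remark shows that for any normal $X$ one has $\underline{X}\le X$, and this is the seed of the whole argument: since $\underline{[A,\rightarrow)}=\{X\in\norma n: A\le\underline{X}\}$, every $X$ in that polytope satisfies $A\le\underline{X}\le X$, so $a_{ij}\le x_{ij}$ for all $i\ne j$. In other words, $a_{ij}$ is a lower bound for the coordinate $x_{ij}$ on the alcoved polytope $\underline{[A,\rightarrow)}$.

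For $A\le\overline{A}$ I would then exploit the defining property of $\overline{A}$. By definition \ref{dfn:overl}, writing $N:=n^2-n+1$ and using the relabeling $y_k=x_{ij}$, we have $\overline{A}_{ij}=h^*_{k,N}$, where $\underline{[A,\rightarrow)}=C_{H^*}$. Now $C_{H^*}$ is described, as in (\ref{eqn:CA}) applied to the order-$N$ matrix $H^*$, by $h^*_{l,N}\le y_l\le -h^*_{N,l}$ together with difference inequalities, and this description is \emph{tight} by proposition 2.6 of \cite{Puente_kleene} — exactly the fact already invoked in the proof of corollary \ref{cor:dimL'}. Tightness means precisely that $h^*_{k,N}=\min\{y_k: y\in C_{H^*}\}=\min\{x_{ij}: X\in\underline{[A,\rightarrow)}\}$. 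Combining this with the previous paragraph gives $\overline{A}_{ij}=h^*_{k,N}\ge a_{ij}$ for every $i\ne j$; on the diagonal both matrices are $0$ by normality, so $A\le\overline{A}$, which together with $\underline{A}\le A$ finishes the proof.

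An alternative to invoking tightness, if a self-contained argument is preferred, is to check directly that the point corresponding to $\overline{A}$, namely $\col(H^*,N)$ truncated to its first $N-1$ entries, lies in $C_{H^*}=\underline{[A,\rightarrow)}$; this is a short computation from idempotency of $H^*$, amounting to the inequalities $h^*_{lm}+h^*_{m,N}\le h^*_{l,N}$, $h^*_{l,N}+h^*_{N,l}\le h^*_{l,l}=0$ and $h^*_{l,N}+h^*_{m,l}\le h^*_{m,N}$, all instances of $H^*\odot H^*=H^*$ and normality. Once $\overline{A}\in\underline{[A,\rightarrow)}$ is known, $A\le\overline{A}$ follows from $A\le X$ for all $X\in\underline{[A,\rightarrow)}$, established in the first paragraph. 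In either route, the only point requiring care is the bookkeeping of the translation between the matrix $X=(x_{ij})$, the relabeled coordinate vector $(y_k)$, and the order-$N$ matrix $H^*$, keeping track of the fact that the extra index $N$ encodes the absolute bounds of the polytope; I do not expect a genuine obstacle beyond that.
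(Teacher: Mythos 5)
Your proof is correct and follows essentially the same route as the paper's: $\underline{A}\le A$ from the definition via $\oplus'=\min$, then $A\le X$ for every $X\in\underline{[A,\rightarrow)}$ since $A\le\underline{X}\le X$, and finally $A\le\overline{A}$ because tightness of the description $C_{H^*}$ identifies $\overline{A}_{ij}=h^*_{k,N}$ as the exact minimum of $x_{ij}$ over that polytope. The alternative self-contained argument you sketch (verifying $\overline{A}\in\underline{[A,\rightarrow)}$ directly from idempotency of $H^*$) is a sound extra, but the main line matches the paper.
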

\begin{proof}
The inequality $\underline{A}\le A$ was explained in p. \pageref{eqn:underl}. Now consider $X$ such that  $A\le \underline{X}$. Then,
$$A\le \underline{X}\le X,$$ by the same reason,
so that $A\le X$. By definition \ref{dfn:overl}, the matrix $\overline{A}$ is obtained from the last column of $H^*$ and, by \cite{Puente_kleene}, the description of the alcoved polytope $\underline{[A,\rightarrow)}$ as $C_{H^*}$ is tight. Part of this  description is $\overline{A}\le X$. Therefore,   $A\le \overline{A}\le X$, by tightness.
\end{proof}

Some questions arise, such as: 
\begin{enumerate}
\item  We know that $\underline{A}\le A\le \overline{A}$. Does every $X$ with $\underline{A}\le X\le \overline{A}$ commute with $A$? The answer is NO. Example:  take $B$ in (\ref{eqn:3x3}) and
\begin{equation*}
X=\left[\begin{array}{rrr}
0&-2&-2\\-4&0&-5\\-4&0&0\\
\end{array}\right], BX=\overline{B}\neq XB=\left[\begin{array}{rrr}
0&-2&-1\\-4&0&-5\\-4&0&0\\
\end{array}\right].
\end{equation*}

\item We know that $A^*$ and 0 belong to $\Omega'(A)$. Does every $X$ with $A^*\le X\le0$ commute with $A$? The answer is NO. Example: for $B$ in (\ref{eqn:3x3}), we have $B^*=\overline{B}$ in (\ref{eqn:B_overl}) and
\begin{equation*}
X=\left[\begin{array}{rrr}
0&-1&-1\\0&0&-1\\0&0&0\\
\end{array}\right]=XB\neq BX=\left[\begin{array}{rrr}
0&-1&-1\\0&0&-1\\-1&0&0\\
\end{array}\right].
\end{equation*}
\end{enumerate}

\section{Perturbations}

\begin{dfn}\label{dfn:size}
Assume $a,b\in \R\mayor$ with $a\le b$. Then $a,b$ are of the \emph{same size} if $b\le 2a$. Otherwise, $2a<b$ and we say that $a$ is \emph{small with respect to} $b$.
\end{dfn}

\label{sec:perturbations}
In the topological space $\norma n\simeq \overline{\R}^{n^2-n}_{\le0}$ the following is expected to hold true, for any  real matrix $A\in \norma n$:
\begin{enumerate}
\item for $j\in [n]$ and each \emph{sufficiently small perturbation} $X$ of $A^{j-1}$, we  have $AX=XA$, and this is a  perturbation of $A^j$, (including the case that $X$ is a perturbation of $I=A^0$ or of $A^*=A^{n-1}$) \label{item:uno}
\item for each \emph{sufficiently small perturbation} $X$ of $0$, we  have $AX=XA$, and this is a  perturbation of $0$.\label{item:dos}
\end{enumerate}

The point here is, of course, to give a precise meaning of \emph{sufficiently small perturbation}. Although  we are not able to do it yet,  we believe that the statement will be about linear inequalities in terms of the non--zero entries $a_{ij}$ of $A$ and some  perturbing constants $\pm\epsilon_1,\ldots,\pm\epsilon_s$, with $\epsilon_k\ge0$ for $k=1,\ldots,s$, and some $s\ge0$. We further believe that the perturbing constants  must be  \emph{small} with respect to every non--zero absolute value $|a_{ij}|$, according to definition \ref{dfn:size}. Recall that $\Omega(A)$ is larger than $\PP(A)$ (see p.~\pageref{commment:larger}). An intriguing related QUESTION is the following:
is every $X\in \Omega(A)$  a \emph{small perturbation} of some member of $\PP(A)$? \label{quest:perturbation}

\m
Below we present some partial results.

\m
For brevity, write $A\oplus B:=M=(m_{ij})$.

\begin{prop}\label{prop:menor_que_max}
Assume $A,B\in \norma n$ are such that $a_{ik}+b_{kj}\le m_{ij}$, for all $i,j,k\in [n]$. Then $AB=BA=M$. In particular, $B\in \Omega^M(A)$.
\end{prop}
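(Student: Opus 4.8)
The plan is to verify directly that every entry of the tropical products $AB$ and $BA$ coincides with the corresponding entry of $M=A\oplus B$. First I would dispose of the diagonal: since $A$ and $B$ are normal, $a_{ii}=b_{ii}=0$, hence $m_{ii}=0$, and taking $k=i$ in the sum $\max_k a_{ik}+b_{kj}$ already contributes $a_{ii}+b_{ii}=0$; combined with the hypothesis $a_{ik}+b_{kj}\le m_{ij}$ this forces $(AB)_{ii}=0=m_{ii}$. So I may focus on off-diagonal entries.

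Next, fix $i\neq j$ and compute $(AB)_{ij}=\max_{k\in[n]}\bigl(a_{ik}+b_{kj}\bigr)$. The hypothesis says each term $a_{ik}+b_{kj}$ is $\le m_{ij}$, so $(AB)_{ij}\le m_{ij}$. For the reverse inequality I would exhibit two particular values of $k$ realising $m_{ij}$: taking $k=j$ gives $a_{ij}+b_{jj}=a_{ij}$ (by normality of $B$), and taking $k=i$ gives $a_{ii}+b_{ij}=b_{ij}$ (by normality of $A$). Hence $(AB)_{ij}\ge\max\{a_{ij},b_{ij}\}=m_{ij}$. Combining the two bounds, $(AB)_{ij}=m_{ij}$. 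The computation of $(BA)_{ij}=\max_k\bigl(b_{ik}+a_{kj}\bigr)$ is symmetric: each term is $\le m_{ij}$ by hypothesis, while $k=i$ yields $b_{ii}+a_{ij}=a_{ij}$ and $k=j$ yields $b_{ij}+a_{jj}=b_{ij}$, so again $(BA)_{ij}=m_{ij}$.

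Putting the diagonal and off-diagonal cases together gives $AB=BA=M$. Finally, since $M=A\oplus B$ is normal whenever $A,B$ are (the maximum of two normal matrices is normal), $M\in\norma n$, so $X:=B$ satisfies $AX=XA=M$, i.e.\ $B\in\Omega^M(A)$, as claimed.

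There is essentially no obstacle here: the only mild subtlety is making sure the hypothesis is used in the right direction (it gives the \emph{upper} bound on each product, while normality supplies the matching \emph{lower} bound via the two distinguished indices $k=i$ and $k=j$). One should also note in passing that the hypothesis is automatically consistent, since for $k\in\{i,j\}$ the inequality $a_{ik}+b_{kj}\le m_{ij}$ holds with equality, so it is a genuine restriction only on the "mixed" terms $k\notin\{i,j\}$.
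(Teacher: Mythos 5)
Your argument is correct and is in substance identical to the paper's proof: the hypothesis supplies the upper bound $(AB)_{ij}\le m_{ij}$, and normality supplies the matching lower bound --- your choice of the two indices $k=i$ and $k=j$ is exactly the entrywise form of the paper's matrix-level observation that $I\le A,B\le 0$ together with monotonicity of tropical multiplication give $A=A\odot I\le AB$ and $B=I\odot B\le AB$, hence $M\le AB$. Your explicit treatment of the diagonal and your remark that $M$ is again normal are fine.

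One caveat, which you inherit from the paper rather than introduce: in the $BA$ half you assert that each term $b_{ik}+a_{kj}$ is $\le m_{ij}$ ``by hypothesis'', but the stated hypothesis only bounds the terms $a_{ik}+b_{kj}$; the two families of inequalities are not equivalent, and without the symmetric bound the conclusion $BA=M$ can genuinely fail. For instance, for $n=3$ take $a_{12}=a_{13}=b_{12}=-1$ and every other off-diagonal entry equal to $0$: then $a_{ik}+b_{kj}\le m_{ij}$ holds for all $i,j,k$, yet $(BA)_{12}=b_{13}+a_{32}=0>m_{12}=-1$. The paper's proof has the same lacuna (it says ``by hypothesis, $AB\le M$ and $BA\le M$''), and the symmetric inequality does hold in the one place the proposition is applied (theorem \ref{thm:perturba_convexo}, where all off-diagonal entries of both matrices lie in $[2r,r]$), so nothing downstream breaks; but strictly speaking the hypothesis should be stated symmetrically, and your proof should say so rather than appeal to the hypothesis as given.
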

\begin{proof}
By normality, $I\le A\le 0$ and $I\le B\le 0$, whence $A\le AB\le 0$ and $B\le AB\le 0$, since (tropical) left or right multiplication  by any matrix is monotonic.
Thus, $M\le AB$ and, similarly, $M\le BA$ and, by hypothesis, $AB\le M$ and $BA\le M$. Therefore $AB=BA=M$.
\end{proof}

\begin{thm}\label{thm:perturba_convexo}
For each $n\in\N$ and each non positive real number $r$, any two order $n$ matrices $A,B$ having zero diagonal  and all off--diagonal entries in the closed interval $[2r,r]$ satisfy $AB=BA=M$. In particular, $B\in \Omega^M(A)$.
\end{thm}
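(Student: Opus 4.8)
The plan is to reduce Theorem~\ref{thm:perturba_convexo} to Proposition~\ref{prop:menor_que_max} by verifying its hypothesis under the size restriction on the entries. So first I would fix $n$ and a non-positive real $r$, and let $A=(a_{ij})$ and $B=(b_{ij})$ be order $n$ matrices with zero diagonal and all off-diagonal entries in $[2r,r]$. Note these matrices are normal (since $2r\le r\le 0$), so $M=A\oplus B$ is again normal, with $m_{ii}=0$ and $m_{ij}\in[2r,r]$ for $i\neq j$. The goal is to show $a_{ik}+b_{kj}\le m_{ij}$ for all $i,j,k\in[n]$; once that is established, Proposition~\ref{prop:menor_que_max} immediately yields $AB=BA=M$ and hence $B\in\Omega^M(A)$.

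The verification of $a_{ik}+b_{kj}\le m_{ij}$ splits into cases according to how many of the three indices coincide. If $i=j=k$, both sides are $0$. If $i=k\neq j$, the left side is $a_{ii}+b_{ij}=b_{ij}\le m_{ij}$; symmetrically if $k=j\neq i$, the left side is $a_{ij}+b_{jj}=a_{ij}\le m_{ij}$. The only genuine case is when $k\notin\{i,j\}$ (which forces $i\neq j$ as well, or else one checks $i=j\neq k$ directly: then the left side is $a_{ik}+b_{ki}\le r+r=2r\le 0=m_{ii}$). In the case $i,j,k$ pairwise distinct (or $i=j\neq k$), the left side is a sum of two off-diagonal entries, hence $\le 2r$; the right side $m_{ij}$ is an off-diagonal entry of a matrix with entries $\ge 2r$, hence $m_{ij}\ge 2r$. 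Therefore $a_{ik}+b_{kj}\le 2r\le m_{ij}$, which is exactly what we need. This is precisely where Definition~\ref{dfn:size} enters: the ``same size'' condition $b\le 2a$ applied with $a=|r|$, $b=2|r|$ guarantees that the sum of two entries of magnitude at most $|r|$ cannot drop below the floor $2r$ imposed on the entries of $M$.

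There is essentially no hard part here: the theorem is a direct corollary of Proposition~\ref{prop:menor_que_max}, and the only work is the elementary case analysis above, which amounts to the inequality $2r\le 2r$ together with the trivial observations that a single off-diagonal entry of $A$ or $B$ is $\ge 2r$ and that $m_{ij}\ge\max(a_{ij},b_{ij})\ge 2r$. If anything deserves care, it is keeping the degenerate index patterns ($i=k$, $k=j$, $i=j$) straight so that the bound $a_{ik}+b_{kj}\le m_{ij}$ is confirmed in every one of them; the genuinely ternary case is the cleanest. I would therefore write the proof as: observe $A,B$ normal; compute that $m_{ij}\ge 2r$ for $i\ne j$; run the short case check on $a_{ik}+b_{kj}$; invoke Proposition~\ref{prop:menor_que_max}.
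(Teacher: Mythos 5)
Your proof is correct and follows essentially the same route as the paper: verify the hypothesis $a_{ik}+b_{kj}\le m_{ij}$ and invoke Proposition~\ref{prop:menor_que_max}. In fact your explicit case analysis for the degenerate index patterns $k=i$ and $k=j$ is slightly more careful than the paper's one-line assertion that $a_{ik}+b_{kj}\le 2r$ for every $k$ (which, as stated, fails when $k\in\{i,j\}$, though the needed bound $\le m_{ij}$ still holds there, exactly as you observe).
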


\begin{proof}
Let  $a_{ii}=b_{ii}=0$ and $2r\le a_{ij}, b_{ij}\le r\le0$, for $i,j\in[n]$.
Fix $i,j\in [n]$ with $i\neq j$. For each $k\in [n]$, we have $a_{ik}+b_{kj}\le 2r\le a_{ij}, b_{ij}$, and we can apply the previous proposition to conclude.
\end{proof}

That is an easy way to produce two real matrices which commute! Moreover,
the matrices $A,B$ and $M$ are idempotent. Indeed, $A\le A^2$ by normality and, since $a_{ij}+a_{jk}\le 2r\le a_{ik}$, we get $A^2\le A$, whence $A={A}^2$; similarly $B=B^2$ and $M=M^2$.
 Here $B\in\Omega(A)$ is a perturbation of $A$ and  $AB=BA=M$ is a perturbation of $A^2=A$, so this is an example of item \ref{item:uno} in p. \pageref{item:uno}, for $j=2$.

\m
In the former theorem, notice that  the absolute value of the entries  $|a_{ij}|$ and $|b_{ij}|$ of $A$ and $B$ are of the \emph{same size}, taken by pairs, as in  definition \ref{dfn:size}.
The reader should compare theorem \ref{thm:perturba_convexo} with example \ref{ex:1}, where $M^2=AB=BA\neq M$, these matrices being different only at entry $(4,1)$. There $A,B$ and $AB$ are idempotent, but $M$ is not.  



\begin{cor}\label{cor:nbhd}
For each $n\in \N$ and  each negative real number $r$, take   $a_{ij}$ in the open interval $(2r,r)$, whenever $i\neq j$ and $a_{ii}=0$, all $i,j\in [n]$. Then $A=(a_{ij})$ is strictly normal and $\Omega(A)$ is a neighborhood of $A$.
\end{cor}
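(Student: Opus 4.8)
The plan is to exhibit an explicit product of closed intervals, centered at the matrix $A$, that is contained in $\Omega(A)$. The starting observation is that the hypothesis $2r<a_{ij}<r$ for $i\neq j$ gives us slack on both sides: there is an $\epsilon>0$ so small that $2r\le a_{ij}-\epsilon$ and $a_{ij}+\epsilon\le r$ for every off--diagonal pair $(i,j)$ (take $\epsilon=\min_{i\neq j}\min\{a_{ij}-2r,\,r-a_{ij}\}>0$, which is positive precisely because the interval $(2r,r)$ is open and there are finitely many entries). Strict normality of $A$ is immediate since $2r<a_{ij}<r<0$.

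First I would define the candidate neighborhood $\mathcal{S}:=\{X=(x_{ij})\in\norma n : x_{ii}=0,\ a_{ij}-\epsilon\le x_{ij}\le a_{ij}+\epsilon\ \text{for}\ i\neq j\}$. Under the identification $\norma n\simeq\overline{\R}^{n^2-n}_{\le0}$, the set $\mathcal{S}$ is a product of the closed intervals $[a_{ij}-\epsilon,\,a_{ij}+\epsilon]$, each of which is a genuine interval neighborhood of $a_{ij}$ inside $[-\infty,0]$ (here we use $a_{ij}+\epsilon\le r<0$, so the interval is a neighborhood in the interval topology and does not touch $0$); hence $\mathcal{S}$ is a neighborhood of $A$ in $\norma n$. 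It remains to show $\mathcal{S}\subseteq\Omega(A)$.

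The key step is that any two matrices $X,Y\in\mathcal{S}$ commute, by exactly the argument of Theorem \ref{thm:perturba_convexo}: for $X,Y\in\mathcal{S}$ all off--diagonal entries lie in the closed interval $[2r,r]$ (since $2r\le a_{ij}-\epsilon\le x_{ij}\le a_{ij}+\epsilon\le r$), and the diagonals are zero, so $XY=YX=X\oplus Y$ by Theorem \ref{thm:perturba_convexo}. Applying this with $Y=A$ (note $A\in\mathcal{S}$) gives $AX=XA$ for every $X\in\mathcal{S}$, i.e. $\mathcal{S}\subseteq\Omega(A)$. Combining with the previous paragraph, $\Omega(A)$ contains the neighborhood $\mathcal{S}$ of $A$, which is the claim.

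I do not expect a serious obstacle here: the corollary is essentially Theorem \ref{thm:perturba_convexo} applied to an $\epsilon$--box inside the open $[2r,r]$--box, and the only points needing care are (i) choosing $\epsilon$ uniformly over the finitely many entries so that the whole box still fits in $[2r,r]$, and (ii) checking that the resulting box is genuinely a neighborhood in the interval topology of $[-\infty,0]$, which holds because the top endpoint $a_{ij}+\epsilon$ stays strictly below $0$. The mild subtlety worth a sentence in the writeup is that $A$ itself lies in $\mathcal{S}$, so that Theorem \ref{thm:perturba_convexo} may be invoked with one of the two matrices equal to $A$.
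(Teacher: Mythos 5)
Your proof is correct and follows essentially the same route as the paper: both rest entirely on Theorem \ref{thm:perturba_convexo} applied to matrices whose off--diagonal entries lie in $[2r,r]$. The paper simply takes the full open box $(2r,r)^{n^2-n}$ as the neighborhood of $A$ inside $\Omega(A)$, which makes your $\epsilon$--shrinking step unnecessary, but your version is equally valid.
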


\begin{proof}
The  Cartesian product of intervals $U=(2r,r)^{n^2-n}$ is open  in $\dom$. The image  $U'$ of $U$  in $\norma n$ satisfies $A\in U'\subseteq \Omega(A)$, by theorem \ref{thm:perturba_convexo}, proving the neighborhood condition.
\end{proof}

\m
Corollary \ref{cor:nbhd} is an instance of item \ref{item:uno} in p. \pageref{item:uno}. Below we present another one.

\m
For $n\ge3$, consider $p=(\sucen pn)\in \R^n\mayor$ and $\epsilon \ge0$ and set
\begin{equation}
P(-p,-\epsilon):=\left[\begin{array}{rrrrr}
0&-\epsilon&\cdots&-\epsilon&-p_n\\
-p_1&0&-\epsilon&\cdots&-\epsilon\\
-\epsilon&-p_2&0&\ddots&\vdots\\
\vdots&\ddots&\ddots&\ddots&-\epsilon\\
-\epsilon&\cdots&-\epsilon&-p_{n-1}&0
\end{array}\right]\in\norma n,
\end{equation}
and for $n\ge4$, set
\begin{equation}
Q(-p,-\epsilon):=\left[\begin{array}{rrrrrr}
0&0&\cdots&0&-\epsilon&-p_n\\
-p_1&0&\cdots&\cdots&0&-\epsilon\\
-\epsilon&-p_2&0&\cdots&\cdots&0\\
0&-\epsilon&-p_3&0&\cdots&\vdots\\
\vdots&\ddots&\ddots&\ddots&\ddots&\vdots\\
0&\cdots&0&-\epsilon&-p_{n-1}&0
\end{array}\right]\in\norma n.
\end{equation}

The matrices $P(-p,-\epsilon)$ and $Q(-p,-\epsilon)$ are   perturbations of $P(-p,0)=Q(-p,0)$.

\begin{thm}\label{thm:perturba_no_convexo}
Let $p\in \R^n\mayor$ 
and let $\delta,\epsilon\ge0$ be such that $\delta+\epsilon\le \min_{i\in[n]}p_i$.
Write $m=\min\{\delta,\epsilon\}$.
Then
\begin{enumerate}
\item $P(-p,-\delta)P(-p,-\epsilon)=P(-p,-\epsilon)P(-p,-\delta)=P(-(\delta+\epsilon, \ldots,\delta+\epsilon),-m)$.
\item $Q(-p,-\delta)Q(-p,-\epsilon)=Q(-p,-\epsilon)Q(-p,-\delta)=Q(-(m,\ldots,m),0)$. 
\end{enumerate}
\end{thm}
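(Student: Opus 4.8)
The plan is to establish both identities by a direct, entrywise computation of the four tropical products, exploiting the ``cyclic banded'' shape of the matrices involved: in each of $P(-p,-\delta)$, $P(-p,-\epsilon)$, $Q(-p,-\delta)$, $Q(-p,-\epsilon)$, every row and every column has only $O(1)$ entries different from the two ``background'' values ($0$ and $-\delta$, resp.\ $0$ and $-\epsilon$), so in $\max_{k}\bigl(a_{ik}+b_{kj}\bigr)$ only boundedly many summands can compete for the maximum, and a maximizer can be exhibited by hand. Note that proposition~\ref{prop:menor_que_max} does not apply directly here: under the hypothesis $\delta+\epsilon\le\min_i p_i$ the product is strictly \emph{smaller} than $A\oplus B$ along the $-p$--band (it equals $-(\delta+\epsilon)$ there, not $-p_{i-1}$), so the sufficient condition of that proposition fails.

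First I would record, with indices read cyclically modulo $n$, that row $i$ of $P(-p,-\delta)$ equals $0$ in column $i$, equals $-p_{i-1}$ in column $i-1$, and equals $-\delta$ in every other column, while column $j$ of $P(-p,-\epsilon)$ equals $0$ in row $j$, equals $-p_j$ in row $j+1$, and equals $-\epsilon$ in every other row; the analogous three/four--term descriptions hold for the rows and columns of the $Q$--matrices.

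Then, writing $A=P(-p,-\delta)$ and $B=P(-p,-\epsilon)$, I would evaluate $(AB)_{ij}=\max_k(a_{ik}+b_{kj})$ by cases on the position $(i,j)$. For $i=j$ the summand $k=i$ gives $0$, and every summand is $\le 0$ by normality, so $(AB)_{ii}=0$. For the distinguished positions $j=i-1$ the summands $k=i$ and $k=i-1$ both give $-p_{i-1}$, whereas every other summand gives $-(\delta+\epsilon)$; since $\delta+\epsilon\le\min_i p_i\le p_{i-1}$, the maximum is $-(\delta+\epsilon)$. For the generic positions $j\notin\{i,i-1\}$ the summand $k=i$ gives $-\epsilon$, the summand $k=j$ gives $-\delta$, the summands $k\in\{i-1,j+1\}$ are at most $-p_{i-1}$ and $-p_j$ respectively and hence $\le -m$, and every remaining summand equals $-(\delta+\epsilon)\le -m$; so the maximum is $\max\{-\delta,-\epsilon\}=-m$. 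This identifies $AB$ with $P(-(\delta+\epsilon,\dots,\delta+\epsilon),-m)$. Interchanging $\delta$ and $\epsilon$ swaps the two factors but leaves both $\delta+\epsilon$ and $m=\min\{\delta,\epsilon\}$ unchanged, so the same computation gives $BA=AB$, which proves (1). Part~(2) follows from the identical scheme applied to the $Q$--matrices, using that in every product $q_{ik}+q_{kj}$ at most one of the two factors lies on the $-p$--band, while the hypothesis once more prevents those $-p$--terms from dominating.

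The main obstacle is the bookkeeping of the cyclic ``wrap--around'' indices together with the coincidences among the handful of distinguished summation indices $k\in\{i,i-1,j,j+1\}$ (and its $Q$--analogue): for small $n$ (namely $n=3$ for $P$ and $n=4$ for $Q$) these indices, and indeed the bands themselves, may collide, so one must take care not to overlook a summand that happens to equal $0$ and thereby misidentify the maximum. I would dispose of these finitely many degenerate configurations either case by case or, more safely, by writing out the complete list $\{\,a_{ik}+b_{kj}:k\in[n]\,\}$ at the position under consideration and bounding each of its members explicitly.
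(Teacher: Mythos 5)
Your method --- direct entrywise evaluation of $\max_k(a_{ik}+b_{kj})$ by cases on the position of $(i,j)$ relative to the cyclic bands --- is exactly what the paper intends (its entire proof is the line ``Straightforward computations''), and your treatment of part (1) is correct and complete: the three cases $j=i$, $j=i-1$ and $j\notin\{i,i-1\}$, the bound $-p_{i-1}\le-(\delta+\epsilon)$ from the hypothesis, and the observation that $\delta+\epsilon$ and $m$ are symmetric in $\delta,\epsilon$ settle the $P$--identity for every $n\ge3$, including the index collisions at $n=3$.

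The gap is in part (2), which you dispatch with ``follows from the identical scheme.'' It does not. Row $i$ of $Q(-p,-\delta)$ equals $0$ in every column outside $\{i-1,i-2\}$, and column $j$ of $Q(-p,-\epsilon)$ equals $0$ in every row outside $\{j+1,j+2\}$ (indices mod $n$). Hence for $n\ge5$ and any $i\ne j$ there is some $k\notin\{i-1,i-2,j+1,j+2\}$ contributing $0+0=0$ to the maximum, so the product is the zero matrix; in particular its $(i,i-1)$ entry is $0$ rather than $-m$, and the claimed identity fails whenever $m>0$ (e.g.\ $n=5$, $p=(10,\dots,10)$, $\delta=2$, $\epsilon=3$ gives $(AB)_{21}=0\ne-2$). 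The statement of (2) is true exactly for $n=4$, where the four distinguished indices exhaust $[4]$ and the $(i,i-1)$ entry becomes $\max\{-p_{i-1},-\delta,-\epsilon\}=-m$; so your proof of (2) must be an explicit $4\times4$ computation, not an appeal to the generic pattern. Note also that your closing paragraph has the danger pointing the wrong way: for $Q$ it is not the small case $n=4$ that threatens the argument but the generic case $n\ge5$, where an overlooked zero summand collapses the product. Either restrict part (2) to $n=4$ and carry out the computation, or record that the theorem as stated fails for $n\ge5$.
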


\begin{proof}
Straightforward computations.
\end{proof}

\begin{ex} \label{ex:4}
Take $p=(4,3,5)$, $\epsilon=1$ and $\delta=2$,
\begin{equation}
P(-p,-2)=\left[\begin{array}{rrr}
0&-2&-5\\-4&0&-2\\-2&-3&0
\end{array}\right], \
P(-p,-1)=\left[\begin{array}{rrr}
0&-1&-5\\-4&0&-1\\-1&-3&0
\end{array}\right].
\end{equation}

By theorem \ref{thm:perturba_no_convexo}, we have
\begin{equation}
P(-p,-2)P(-p,-1)=P(-p,-1)P(-p,-2)=P(-(3,3,3),-1)=\left[\begin{array}{rrr}
0&-1&-3\\-3&0&-1\\-1&-3&0
\end{array}\right].
\end{equation}
Pictures for this example are shown in figure \ref{fig_01}. Write $A=P(-p,-2)$, $B=P(-p,-1)$, $C=AB=BA$. In $\R^2$ we have sketched the intersection of the classical hyperplane $\{x_3=0\}$ with $\spann A, \spann P(-p,0)$ and $\spann B$ on top,  and with $\spann C$ bottom.
To do so, we have used the matrices $A_0$, $P(-p,0)_0$, $B_0$ and $C_0$ as defined in p. \pageref{not:sub_cero}:
\begin{equation*}
A_0=\left[\begin{array}{rrr}
2&1&-5\\-2&3&-2\\0&0&0
\end{array}\right],
P(-p,0)_0=\left[\begin{array}{rrr}
0&3&-5\\-4&3&0\\0&0&0
\end{array}\right],
B_0=\left[\begin{array}{rrr}
1&2&-5\\-3&3&-1\\0&0&0
\end{array}\right],
\end{equation*}
\begin{equation*}
C_0=\left[\begin{array}{rrr}
1&2&-3\\-2&3&-1\\0&0&0
\end{array}\right].
\end{equation*}
\end{ex}

\begin{figure}[h]
 \centering
  \includegraphics[width=13cm]{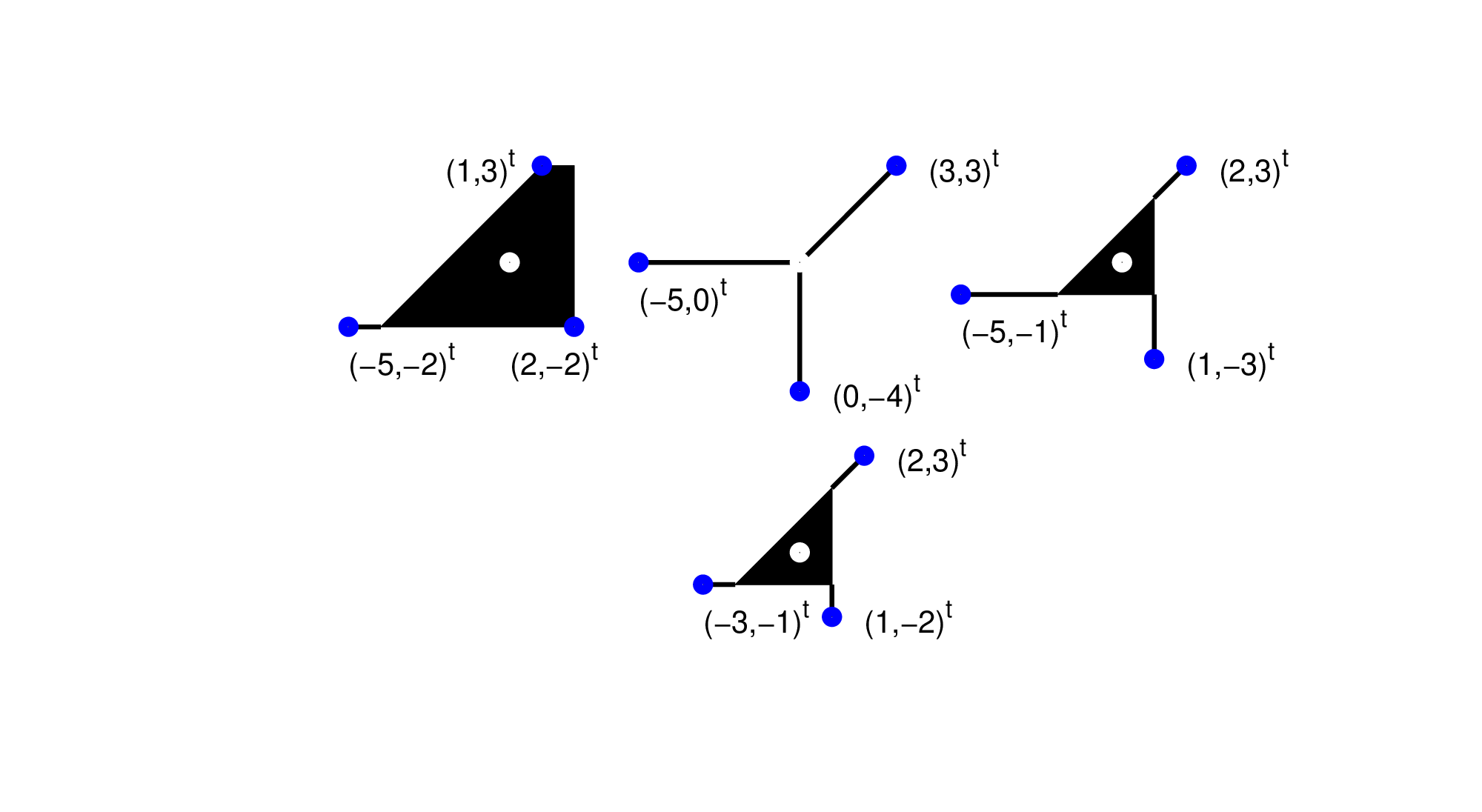}\\
  \caption{Top:  $\hipp\cap \spann A$ (left), $\hipp\cap \spann P(-p,0)$ (center) and  $\hipp\cap\spann B$ (right), for $p=(4,3,5)$. Bottom: $\hipp\cap \spann C$, with $C=AB=BA$. In each case, the zero vector is marked in white and generators  are represented in blue. The matrices $A,B$ and $C$ are perturbations of $P(-p,0)$.}\label{fig_01}
\end{figure}

\section{Geometry}\label{sec:geom}
Let $A,B\in\norma n$ be real.
Here we study the role played by the geometry of the complexes $\spann A$ and $\spann B$ in order to have $AB=BA$. To do so, we bear in mind how the maps $f_A$ and $f_B$ act, where $f_A:\overline{\R}^n \rightarrow \overline{\R}^n$ transforms a column vector $X$ into the product $AX$. For $n=3$, $f_A$ is described in detail in see \cite{Puente_lin}; see also  \cite{Rincon}.

\m
Before, we have met two instances where the geometry explains why $AB=BA$. Namely, in remarks after propositions \ref{prop:neighI} and \ref{prop:neigh0}. In the first (resp. second) case we have $AB=BA=A$ (resp. $AB=BA=B$) because $\spann B$ is much larger (resp. smaller) than $\spann A$.

\m
 More generally, we explore the relationship among the sets $\spann A$, $\spann B$, $\spann (AB)$ and $\spann (BA)$ when commutativity is present or absent. In general, we have $\spann (AB)\subseteq \spann A$ and  $\spann (BA)\subseteq \spann B$. In particular,  if $AB=BA$ then  $\spann (AB)\subseteq \spann A\cap \spann B$.

\begin{prop}\label{prop:spans}
Let $A,B\in\norma n$. If $A\le B=B^2$ and $A$ is real, then $A\in\Omega^B(B)$  and $\spann A\supseteq \spann B$.
\end{prop}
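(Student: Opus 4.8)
The claim has two parts: that $A\in\Omega^B(B)$, i.e. $AB=BA=B$, and that $\spann A\supseteq\spann B$.

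For the first part, the plan is to exploit the hypotheses $A\le B$, $B=B^2$, and the fact that $B$ is normal (so $I\le B$). Since $I\le A$ by normality of $A$, multiplying the chain $I\le A\le B$ on the left by $B$ gives $B=BI\le BA\le BB=B^2=B$, using monotonicity of tropical left multiplication; hence $BA=B$. Symmetrically, right multiplication by $B$ gives $B=IB\le AB\le BB=B$, so $AB=B$. Thus $AB=BA=B$, which is precisely the statement $A\in\Omega^B(B)$ (and in particular $A\in\Omega(B)$, since $B$ is real — here one should note that $A$ being real ensures we stay inside $\norma n$ as required by the definition of $\Omega^B(B)$).

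For the second part, I would use the fact that for any matrices $\spann(AB)\subseteq\spann A$ (stated in the text just before the proposition) together with $AB=B$ just established: this gives $\spann B=\spann(AB)\subseteq\spann A$. Alternatively, and perhaps more transparently, one can argue directly: since $AB=B$, every column of $B$ is a tropical linear combination of the columns of $A$ (the $j$-th column of $B=AB$ equals $\bigoplus_k (b_{kj}\odot a_k)$ where $a_k$ are the columns of $A$), so $\spann B\subseteq\spann A$.

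The only genuinely delicate point is making sure the monotonicity and the squeeze $B\le BA\le B$ are applied correctly, and checking that the definition of $\Omega^B(B)$ is being met — namely that $A$ is required to be normal and real, which is given, and that $BA=AB=B$, which is what we proved. None of this requires more than routine verification, so the main obstacle is essentially bookkeeping rather than any real difficulty; the substance is the two-line monotonicity squeeze.

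\begin{proof}
Since $A$ is normal, $I\le A$; combined with the hypothesis, $I\le A\le B$. Left multiplication by $B$ is monotonic, so $B=BI\le BA\le BB=B^2=B$, whence $BA=B$. Right multiplication by $B$ is also monotonic, so $B=IB\le AB\le BB=B^2=B$, whence $AB=B$. As $A$ is real and normal, this shows $A\in\Omega^B(B)$. Finally, from $AB=B$ and the general inclusion $\spann(AB)\subseteq\spann A$ we obtain $\spann B=\spann(AB)\subseteq\spann A$.
\end{proof}
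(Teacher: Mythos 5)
Your proof is correct and follows essentially the same route as the paper's: the squeeze $B=BI\le BA\le B^2=B$ (and its right-multiplication counterpart) obtained from $I\le A\le B$ via monotonicity, followed by the general inclusion $\spann(AB)\subseteq\spann A$ applied to $AB=B$. No gaps; your version just spells out the two sides of the squeeze separately.
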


\begin{proof}
By normality, we have $I\le A\le B\le0$ and left or right tropical multiplication by any  matrix is monotonic. Therefore, $B\le AB\le B^2=B$ and $B\le BA\le B^2=B$, whence $AB=BA=B$ and $A\in\Omega^B(B)$.
Moreover, whatever the matrices $A$ and $B$ may be, we have $\spann A\supseteq \spann (AB)$ and, in our case, $\spann (AB)=\spann B$.
\end{proof}

The hypothesis $B=B^2$ cannot be removed in the previous proposition, as the following example shows.

\begin{ex}\label{ex:ultimo}
Consider
\begin{equation*}
A=\left[\begin{array}{rrr}
0&-1&-3\\0&0&-4\\0&0&0\\
\end{array}\right] \le B=\left[\begin{array}{rrr}
0&-1&-2\\0&0&-4\\0&0&0\\
\end{array}\right],
\end{equation*}
then
\begin{equation*}
AB=\left[\begin{array}{rrr}
0&-1&-2\\0&0&-2\\0&0&0\\
\end{array}\right] \neq BA=\left[\begin{array}{rrr}
0&-1&-2\\0&0&-3\\0&0&0\\
\end{array}\right],
\end{equation*}
and $\spann A\not\supseteq \spann B$;
 see figure \ref{fig_02}.
\end{ex}

\begin{figure}[h]
 \centering
  \includegraphics[keepaspectratio,width=13cm]{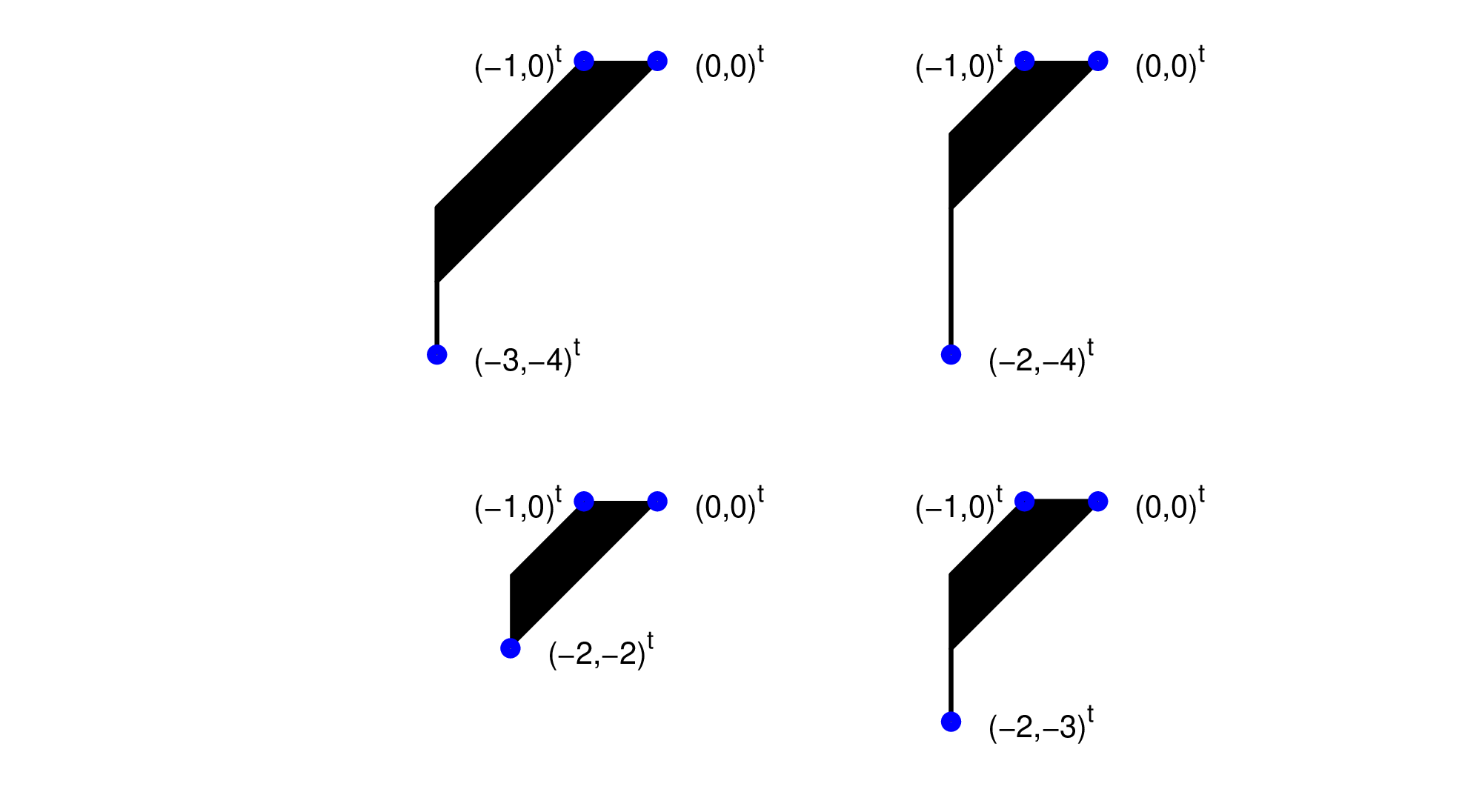}\\
  \caption{Top: $\{x_3=0\}\cap\spann A$ (left), $\{x_3=0\}\cap\spann B$ (right). Bottom: $\{x_3=0\}\cap\spann (AB)$ (left), $\{x_3=0\}\cap\spann (BA)$ (right). In this case, $\spann A\cap\spann B=\spann (AB)$. Generators  are represented in blue.}
  \label{fig_02}
\end{figure}

Below we explore the properties of the matrices $\underline{A}, A$ and $\overline{A}$ and of the corresponding polyhedral complexes.

\setcounter{thm}{10}
\begin{ex}\label{ex:de_nuevo}
(Continued) By proposition \ref{prop:ineq_bars}, we have
\begin{equation*}
\underline{B}=\left[\begin{array}{rrr}
0&-3&-3\\
-5&0&-6\\
-5&-2&0
\end{array}\right]\le
B=\left[\begin{array}{rrr}
0&-3&-1\\
-4&0&-6\\
-5&0&0
\end{array}\right]\le
\overline{B}=\left[\begin{array}{rrr}
0&-1&-1\\ -4&0&-5\\ -4&0&0 \end{array}\right]
\end{equation*}
and we can easily check, in this case, that
$$\spann \underline{B}\supseteq \spann B\supseteq \spann \overline{B}.$$
See figure \ref{fig_03}, where we are using the matrices
 \begin{equation*}
\underline{B}_0=\left[\begin{array}{rrr}
5&-1&-3\\
0&2&-6\\
0&0&0
\end{array}\right],
B_0=\left[\begin{array}{rrr}
5&-3&-1\\
1&0&-6\\
0&0&0
\end{array}\right],
\overline{B}_0=\left[\begin{array}{rrr}
4&-1&-1\\
0&0&-5\\
0&0&0
\end{array}\right],
\end{equation*}
as defined in p. \pageref{not:sub_cero}.
Notice that $\{x_3=0\}\cap \spann B$ is the union  of one closed 2--dimensional cell (called \emph{soma}) and three closed 1--dimensional cells (called \emph{antennas}); see \cite{Puente_lin} for the definition of soma, antennas and co--antennas (with a slightly different notation and language). In figure 3, bottom, we can see $\{x_3=0\}\cap \spann B$ together with its co--antennas.

In this example,
$$\overline{B}=B^*$$
and the matrix  $\underline{B}$ is idempotent.
Therefore, the sets $\spann \underline{B}$ and $\spann \overline{B}$ are classically convex, 
and so are the sections $\{x_3=0\}\cap\spann \underline{B}$ and $\{x_3=0\}\cap\spann \overline{B}$.

Consider $\HH$, the classical convex hull  of $\{x_3=0\}\cap\spann B$: its the vertices   are $(5,0)^t, (5,1)^t, (-2,1)^t, (-3,0)^t, (-3,-6)^t$ and $(-1,-6)^t$,  going counterclockwise.
Notice that $\{x_3=0\}\cap\spann \underline{B}$ is \emph{strictly larger} than $\HH$.
Actually, $\{x_3=0\}\cap\spann \underline{B}$ is the  convex hull of the \emph{union} of $\{x_3=0\}\cap\spann B$ and the \emph{co--antennas} of it. 
On the other hand, $\{x_3=0\}\cap\spann \overline{B}$ is the \emph{soma} of $\{x_3=0\}\cap\spann B$, i.e., it is the maximal convex set contained there.\qed
\end{ex}
\setcounter{thm}{25}

\begin{figure}[h]
 \centering
  \includegraphics[keepaspectratio,width=15cm]{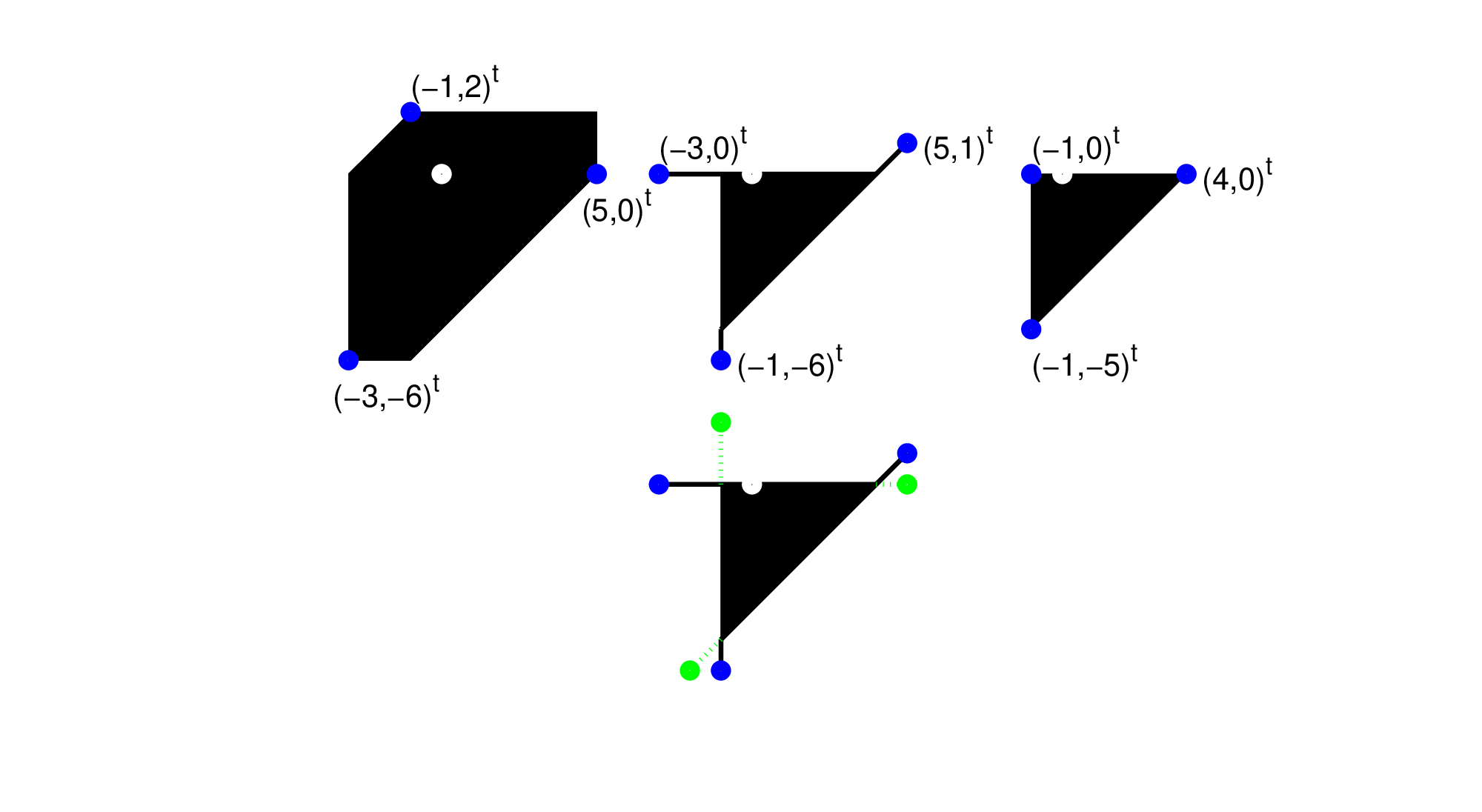}\\
  \caption{Top: $\{x_3=0\} \cap\spann \underline{B}$ (left), $\{x_3=0\} \cap\spann B$ (center)  and $\{x_3=0\} \cap\spann \overline{B}$ (right), for $B$ in (\ref{eqn:3x3}). In each case, the zero vector is marked in white, and generators (i.e., columns of the corresponding matrix $\underline{B}_0$, $B_0$ and $\overline{B}_0$) are represented in blue. The hyperplane section $\{x_3=0\}\cap \spann B$ has three antennas. Bottom: $\{x_3=0\}\cap \spann B$ is represented together with its co--antennas, which appear dotted  in green. The convex hull of the bottom figure is the top left one.}
  \label{fig_03}
\end{figure}
We wonder whether the statements in the former example are true for any real $B\in \norma n$. This is an open QUESTION. \label{quest:generalize}






\begin{thebibliography}{11}
\newcounter{bi}
\setcounter{bi}{-1}
\stepcounter{bi}

\addtocounter{bi}{1}\bibitem[\thebi]{Akian_HB} M. Akian,  R. Bapat and S. Gaubert, \emph{Max--plus algebra}, chapter 25 in \emph{Handbook of linear algebra}, L. Hobgen (ed.) Chapman and Hall, 2007.
%
\addtocounter{bi}{1}\bibitem[\thebi]{Baccelli} F.L. Baccelli, G. Cohen, G.J. Olsder and J.P. Quadrat, \emph{ Syncronization and linearity}, John Wiley; Chichester; New York, 1992.
%
\addtocounter{bi}{1}\bibitem[\thebi]{Brugalle_fran} E. Brugall\'{e}, \emph{Un peu de g\'{e}om\'{e}trie tropicale}, Quadrature, \textbf{74}, (2009), 10--22.

\addtocounter{bi}{1}\bibitem[\thebi]{Brugalle_engl} E. Brugall\'{e}, \emph{Some aspects of tropical geometry}, Newsletter of the European Mathematical Society, \textbf{83}, (2012), 23--28.


\addtocounter{bi}{1}\bibitem[\thebi]{Butkovic} P. Butkovi\v{c}, \emph{Max--algebra: the linear algebra of combinatorics?}, Linear Algebra Appl. \textbf{367}, (2003), 313--335.
%
\addtocounter{bi}{1}\bibitem[\thebi]{Butkovic_S} P. Butkovi\v{c}, \emph{Simple image set of $(\max,+)$ linear mappings}, Discrete Appl. Math. \textbf{105}, (2000), 73--86.

\addtocounter{bi}{1}\bibitem[\thebi]{Butkovic_libro} P. Butkovi\v{c}, \emph{Max--plus linear systems: theory and algorithms},  2010, Springer.

%
\addtocounter{bi}{1}\bibitem[\thebi]{Cuninghame} R. Cuninghame--Green, \emph{Minimax algebra}, LNEMS, \textbf{166}, Springer, 1970.

\addtocounter{bi}{1}\bibitem[\thebi]{Cuninghame_New} R.A.  Cuninghame--Green, \emph{Minimax algebra and applications}, in \emph{Adv. Imag. Electr. Phys.}, \textbf{90}, P. Hawkes, (ed.), Academic Press, 1--121, 1995.
%
\addtocounter{bi}{1}\bibitem[\thebi]{Develin_Sturm} M. Develin, B. Sturmfels,
\emph{Tropical convexity}, \emph{Doc. Math.} \textbf{9},  (2004) 1--27;  Erratum in Doc. Math. \textbf{9} (electronic),
(2004) 205--206.

%
\addtocounter{bi}{1}\bibitem[\thebi]{Gathmann}  A. Gathmann, \emph{Tropical algebraic
geometry}, Jahresbericht der DMV, \textbf{108}, n.1,
(2006), 3--32.
%
%
%
%


\addtocounter{bi}{1}\bibitem[\thebi]{Franceses}  I. Itenberg, E. Brugall\'{e}, B. Tessier, \emph{Géométrie tropicale}, Editions de l'\'{E}cole Polythecnique, 2008.
\addtocounter{bi}{1}\bibitem[\thebi]{Rusos}  I. Itenberg, G. Mikhalkin and E. Shustin, \emph{Tropical algebraic geometry},
Birkh{\"a}user,  2007.

%

\addtocounter{bi}{1}\bibitem[\thebi]{Johns_Kambi_Idempot}  M. Johnson and M. Kambites, \emph{Idempotent tropical matrices and finite metric spaces}, to appear in Adv. Geom.; arXiv: 1203.2480, 2012.

\addtocounter{bi}{1}\bibitem[\thebi]{Izha_Johns_Kambi_Proj}  Z. Izhakian, M. Johnson and M. Kambites, \emph{Pure dimension and projectivity of tropical politopes}, arXiv: 1106.4525, 2012.

\addtocounter{bi}{1}\bibitem[\thebi]{Izha_Johns_Kambi_Groups}  Z. Izhakian, M. Johnson and M. Kambites, \emph{Tropical matrix groups}, arXiv: 1203.2449, 2012.

\addtocounter{bi}{1}\bibitem[\thebi]{Jimenez_P} A. Jim\'{e}nez and M.J. de la Puente, \emph{Characterizing the convexity of the $n$--dimensional tropical simplex and  the six maximal convex classes in $\R^3$}, arXiv: 1205.4162, 2012.
%
\addtocounter{bi}{1}\bibitem[\thebi]{Joswig_Kulas} M. Joswig and K. Kulas, \emph{Tropical and ordinary convexity combined}, Adv. Geom. \textbf{10}, (2010)
333-352.

\addtocounter{bi}{1}\bibitem[\thebi]{Joswig_Sturm_Yu} M. Joswig, B. Sturmfels and J. Yu, \emph{Affine buildings and tropical convexity}, Albanian J. Math. \textbf{1}, n.4, (2007) 187--211.

\addtocounter{bi}{1}\bibitem[\thebi]{Katz_Schn_Sergeev} R. Katz, H. Schneider and S. Sergeev, \emph{On commuting matrices in max algebra and in classical nonegative algebra},  Linear Algebra Appl. \textbf{436}, (2012), 276--292.



\addtocounter{bi}{1}\bibitem[\thebi]{Kuhn}  H.W. Kuhn, \emph{The Hungarian method for the assignment problem}, Naval Res. Logist. \textbf{2}, (1955), 83--97.

\addtocounter{bi}{1}\bibitem[\thebi]{Lam_Postnikov}  T. Lam and A. Postnikov, \emph{
Alcoved polytopes I},  Discrete Comput. Geom.,   \textbf{38} n.3,  (2007)  453-478.

\addtocounter{bi}{1}\bibitem[\thebi]{Lam_PostnikovII}  T. Lam and A. Postnikov, \emph{
Alcoved polytopes II},  arXiv:1202.4015v1 (2012).

%
%
\addtocounter{bi}{1}\bibitem[\thebi]{Litvinov_ed}  G.L. Litvinov, V.P. Maslov, (eds.) \emph{
Idempotent mathematics and mathematical physics},  Proceedings Vienna 2003, American
Mathematical Society, Contemp. Math. \textbf{377}, (2005).

%


%
\addtocounter{bi}{1}\bibitem[\thebi]{Mikhalkin_W}  G. Mikhalkin, \emph{What is a tropical curve?}, Notices AMS, April 2007, 511--513.

\addtocounter{bi}{1}\bibitem[\thebi]{Papa}  C.H. Papadimitriou and K. Steiglitz, \emph{Combinatorial optimization: algorithms and complexity}, Prentice Hall,   1982 and corrected unabrideged republication by Dover, 1998.

\addtocounter{bi}{1}\bibitem[\thebi]{Prasolov}  V.V. Prasolov, \emph{Problems and theorems in linear algebra}, AMS,   1994.

\addtocounter{bi}{1}\bibitem[\thebi]{Puente_lin} M. J. de la Puente, \emph{Tropical linear maps on the plane},
Linear Algebra Appl. \textbf{435}, n.7, (2011) 1681--1710.

\addtocounter{bi}{1}\bibitem[\thebi]{Puente_kleene} M. J. de la Puente, \emph{On tropical Kleene star matrices and alcoved polytopes}, Kybernetika, \textbf{49}, n.6, (2013) 897--910.

\addtocounter{bi}{1}\bibitem[\thebi]{Richter}  J. Richter--Gebert, B. Sturmfels, T.
Theobald, \emph{First steps in tropical geometry}, in
\cite{Litvinov_ed},  289--317.
\addtocounter{bi}{1}\bibitem[\thebi]{Rincon}  F. Rinc\'{o}n,
 \emph{Local tropical linear spaces}, Discrete Comput. Geom. \textbf{50}, (2013), 700--713.
%

\addtocounter{bi}{1}\bibitem[\thebi]{Sergeev_def} S. Sergeev, \emph{Max--plus definite matrix closures and their eigenspaces,} Linear
Algebra Appl. \textbf{421}, (2007) 182--201.


\addtocounter{bi}{1}\bibitem[\thebi]{Sergeev_S_B}  S. Sergeev, H. Scheneider and P. Butkovi\v{c},   \emph{On visualization, subeigenvectors and Kleene stars in max algebra}, Linear Algebra Appl. \textbf{431}, 2395--2406, (2009).
%
\addtocounter{bi}{1}\bibitem[\thebi]{Speyer_Sturm_Mag}  D. Speyer, B. Sturmfels,   \emph{Tropical mathematics},
Math. Mag.  \textbf{82}, n.3, (2009) 163--173.
\addtocounter{bi}{1}\bibitem[\thebi]{Wagneur_M} E. Wagneur, \emph{Modulo\"{\i}ds and
pseudomodules. Dimension theory}, Discr. Math. \textbf{ 98}
(1991) 57--73.

\addtocounter{bi}{1}\bibitem[\thebi]{Werner_Yu} A. Werner and J. Yu, \emph{Symmetric alcoved polytopes}, arXiv: 1201.4378v1 (2012).

\addtocounter{bi}{1}\bibitem[\thebi]{Yoeli} M. Yoeli, \emph{A note on a generalization of boolean matrix theory}, Amer. Math. Monthly \textbf{68}, n.6,
(1961) 552--557.

\addtocounter{bi}{1}\bibitem[\thebi]{Zimmermann_K} K. Zimmermann, \emph{Extrem\'{a}ln\'{\i} algebra},  V\'{y}zkumn\'{a} publikace ekonomicko--matematick\'{e} laborato\v{r}e p\v{r}i ekonomick\'{e}m \'{u}stav\'{e} \v{C}SAV, \textbf{46}, Prague, 1976, in Czech.



\end{thebibliography}
\end{document}